\theoremstyle{plain}
\newtheorem{thm}{Theorem}[section]
\newtheorem{lemma}[thm]{Lemma}
\newtheorem{prop}[thm]{Proposition}
\theoremstyle{definition}
\newtheorem{rmk}[thm]{Remark}
\newtheorem{example}[thm]{Example}
\def\dim{\mathop{\hbox {dim}}\nolimits}
\def\ker{\mathop{\hbox{Ker}}\nolimits}
\newcommand{\fra}{\mathfrak{a}}
\newcommand{\frb}{\mathfrak{b}}
\newcommand{\frg}{\mathfrak{g}}
\newcommand{\frh}{\mathfrak{h}}
\newcommand{\frk}{\mathfrak{k}}
\newcommand{\frl}{\mathfrak{l}}
\newcommand{\frp}{\mathfrak{p}}
\newcommand{\frq}{\mathfrak{q}}
\newcommand{\frt}{\mathfrak{t}}
\newcommand{\fru}{\mathfrak{u}}
\newcommand{\bbC}{\mathbb{C}}
\newcommand{\bbZ}{\mathbb{Z}}
\newcommand{\caC}{\mathcal{C}}
\begin{document}

\title{Dirac series of $E_{8(-24)}$}

\author{Yi-Hao Ding}
\address[Ding]{School of Mathematical Sciences, Soochow University, Suzhou 215006,
P.~R.~China}
\email{435025738@qq.com}

\author{Chao-Ping Dong}
\address[Dong]{School of Mathematical Sciences, Soochow University, Suzhou 215006,
P.~R.~China}
\email{chaopindong@163.com}

\author{Chengyu Du}
\address[Du]{School of Mathematical Sciences, Soochow University, Suzhou 215006,
P.~R.~China}
\email{cydu0973@suda.edu.cn}

\author{Yong-Zhi Luan}
\address[Luan]{School of Mathematics, Shandong University, Jinan 250100,
P.~R.~China}
\email{luanyongzhi@email.sdu.edu.cn}

\author{Liang Yang}
\address[Yang]{College of Mathematics, Sichuan University, Chengdu 610064,
P.~R.~China}
\email{malyang@scu.edu.cn}

\abstract{This paper classifies the Dirac series of $E_{8(-24)}$, the linear quaternionic real form of complex $E_8$. One tool for us is a further sharpening of the Helgason-Johnson bound in 1969. Our calculation continues to support Vogan's fundamental parallelepiped conjecture.}

\endabstract

\subjclass[2010]{Primary 22E46}

\keywords{cancellation phenomenon, Dirac cohomology, Helgason-Johnson bound}

\maketitle


\section{Introduction}
This paper aims to carry on the classification of Dirac series (that is, irreducible unitary representations with non-zero Dirac cohomology) for real exceptional Lie groups. Our target group here is $E_{8(-24)}$, the linear real form of complex $E_8$ such that $G/K$ is of quaternionic type in the sense of Wolf \cite{Wo}. It is worth noting that Dirac series of complex $E_8$ has been classified by Barbasch and Wong recently \cite{BW}. It took us one year and a half to finish the current work. We believe that the split $E_8$ case is still very challenging.

Here and henceforth, we denote by $G$ a connected simple real reductive Lie group with finite center, by $\theta$ a Cartan involution of $G$, and by $K:=G^{\theta}$ a maximal compact subgroup of $G$. Then
$$
\frg_0=\frk_0+\frp_0
$$
is the Cartan decomposition on the Lie algebra level, with the group level counterpart being $G=K\exp(\frp_0)$. Take a maximal torus $T_f$ of $K$, and write its Lie algebra as $\frt_{f, 0}$. Let $\fra_{f, 0}=Z_{\frp_0}(\frt_{f, 0})$ and put $A_f=\exp(\fra_{f, 0})$. Then $H_f:=T_f A_f$ is a $\theta$-stable maximal compact Cartan subgroup of $G$. As usual, we will drop the subscripts to denote the complexified Lie algebras. We fix once for all a positive root system $\Delta^+(\frk, \frt_f)$ and denote the half-sum of the roots as $\rho_c$. We choose a positive root system $(\Delta^+)^{(0)}(\frg, \frt_f)$ containing $\Delta^+(\frk, \frt_f)$, and denote the half-sum of the roots by $\rho^{(0)}$. Correspondingly, we have Weyl groups $W(\frk, \frt_f)$ and $W(\frg, \frt_f)$. We fix an invariant non-degenerate bilinear form $B(\cdot, \cdot)$ on $\frg$. Its restrictions to $\frh_f$, $\frh_f^*$ etc will be denoted by the same symbol.

A main unsolved problem in representation theory of Lie groups is the classification of $\widehat{G}$, the \emph{unitary dual} of $G$, which consists of all the equivalence classes of irreducible unitary representations of $G$. Due to Harish-Chandra's foundational work, to classify $\widehat{G}$, it suffices to understand all irreducible $(\frg, K)$ modules having a positive definite invariant Hermitian form. To arrive at $\widehat{G}$, one has to deal with (great many) non-unitary representations. Indeed, let us denote by $\widehat{G}_{\rm admi}$ the set of all the irreducible admissible $(\frg, K)$ modules (up to equivalence), and by  $\widehat{G}_{\rm Herm}$ the members of $\widehat{G}_{\rm admi}$ having invariant Hermitian forms. Then one has that
$$
\widehat{G} \subset \widehat{G}_{\rm Herm} \subset \widehat{G}_{\rm admi}.
$$
Note that $\widehat{G}_{\rm admi}$ was classified by Langlands in 1970s, and $\widehat{G}_{\rm Herm}$ was classified by Knapp and Zuckerman later. To sieve out unitary representations from $\widehat{G}_{\rm Herm}$, one has to compute the signature of the Hermitian form. An elementary but effective tool on this aspect is Parthasarathy's Dirac operator inequality \cite{Pa2}.  Indeed, the \emph{Dirac operator} \cite{Pa} is defined as
\begin{equation}\label{Dirac-operator}
D:=\sum_{i=1}^{n} Z_i\otimes Z_i\in U(\frg)\otimes C(\frp)
\end{equation}
Here $\{Z_i\}_{i=1}^{n}$ is an orthonormal basis of $\frp_0$ with respect to the inner product on $\frp_0$ induced by $B(\cdot, \cdot)$, while $U(\frg)$ is the universal enveloping algebra and $C(\frp)$ is the Clifford algebra. Note that $D$ is independent of the choice of $\{Z_i\}_{i=1}^{n}$.

Using a formula of $D^2$, which is a natural Laplacian on $G$, one can deduce
the \emph{Dirac inequality} as follows
\begin{equation}\label{Dirac-inequality-original}
\|\gamma+\rho_c\| \geq \|\Lambda\|,
\end{equation}
where $\gamma$ is the highest weight of any $\widetilde{K}$-type in $\pi\otimes S_G$. Here $S_G$ is an irreducible module of $C(\frp)$ and $\widetilde{K}$ is the spin covering group of $K$. Namely, it consists of elements $(k, s)$ in $K\times {\rm Spin}(\frp_0)$ such that $Ad(k)=p(s)$, where $Ad: K\to {\rm SO}(\frp_0)$ is the adjoint map and $p: {\rm Spin}(\frp_0)\to {\rm SO}(\frp_0)$ is the double covering map.

Let $\pi$ be an irreducible admissible $(\frg, K)$ module. Then $D$ acts on $\pi\otimes S_G$.
To further sharpen the Dirac inequality, thus to have a better understanding of $\widehat{G}$, in 1997, Vogan introduced the \emph{Dirac cohomoloy} of  $\pi$ as the following $\widetilde{K}$ module
\begin{equation}\label{Dirac-cohomology}
H_D(\pi)=\ker D/\ker D\cap {\rm Im} D.
\end{equation}
A fundamental result in the study of Dirac cohomology is the following Vogan conjecture proven by Huang and Pand\v zi\'c in 2002.
\begin{thm}\label{thm-HP}\emph{(\cite{HP})}
Let $\pi$ be an irreducible $(\frg, K)$ module with infinitesimal character $\Lambda$. Suppose that $H_D(\pi)$ is non-zero. Then for any $\widetilde{K}$-type $E_{\gamma}$ in $H_D(\pi)$, there exists $w\in W(\frg, \frt_f)$ such that $w(\gamma+\rho_c)=\Lambda$.
\end{thm}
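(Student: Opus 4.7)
The plan is to reduce the statement to a purely algebraic identity inside $U(\frg) \otimes C(\frp)$ and then exploit that $D$ is $\wt K$-invariant. Let $\frk_\Delta$ denote the diagonal embedding of $\frk$ into $U(\frg) \otimes C(\frp)$, where each $X \in \frk$ is sent to $X \otimes 1 + 1 \otimes \alpha(X)$ with $\alpha(X) \in C(\frp)$ the image of $X$ under $\frk \to \frso(\frp) \cong \bigwedge^2 \frp \hookrightarrow C(\frp)$. The key intermediate result I would prove is: for every $z \in Z(\frg)$ there exist $\zeta(z) \in U(\frk_\Delta)^K$ and $a \in U(\frg) \otimes C(\frp)$ such that
\[
z \otimes 1 \;=\; \zeta(z) + D a + a D,
\]
and that the resulting $\zeta : Z(\frg) \to Z(\frk_\Delta)$ is an algebra homomorphism. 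Granting this, the theorem follows quickly. Because $D$ is $\wt K$-invariant it commutes with every element of $U(\frk_\Delta)$; hence if $v \in \ker D$ represents a non-zero class of $H_D(\pi)$ lying in the $\wt K$-type $E_\gamma$, applying the identity to $v$ gives $(z \otimes 1)v \equiv \zeta(z)v \pmod{\im D}$. The left side acts by $\chi_\Lambda(z)$, whereas $\zeta(z) \in Z(\frk_\Delta)$ acts on $E_\gamma$ by $\chi_{\gamma + \rho_c}(\zeta(z))$. Non-triviality of the class forces $\chi_\Lambda(z) = \chi_{\gamma + \rho_c}(\zeta(z))$ for every $z \in Z(\frg)$, and the two Harish-Chandra isomorphisms convert this into a $w \in W(\frg, \frt_f)$ with $w(\gamma + \rho_c) = \Lambda$.

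To construct $\zeta$, I would pass to associated graded algebras. The natural filtrations on $U(\frg)$ and $C(\frp)$ give $\gr\bigl(U(\frg) \otimes C(\frp)\bigr) \cong S(\frg) \otimes \bigwedge \frp$, and the principal symbol of $D$ is a Koszul-type differential associated to the inclusion $\frp \hookrightarrow \frg$. A filtration-by-degree induction reduces the desired identity to showing that any $K$-invariant element of $S(\frg)$ is cohomologous, modulo the Koszul coboundary, to an element coming from $S(\frk_\Delta)$. This step rests on Kostant's separation of variables $S(\frg) = S(\frg)^G \otimes \caH_\frg$ together with an analysis of the Koszul complex attached to $\frp$ with coefficients in $\caH_\frg$. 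Lifting the graded statement back to $U(\frg) \otimes C(\frp)$ produces $a$ by a standard inductive construction. To pin down $\zeta$ on Harish-Chandra parameters one checks a single generator: using Parthasarathy's formula $D^2 = -\,\mathrm{Cas}_\frg \otimes 1 + \mathrm{Cas}_{\frk_\Delta} + \|\rho_c\|^2 - \|\rho\|^2$ (with sign conventions in force), one reads off $\zeta(\mathrm{Cas}_\frg) = \mathrm{Cas}_{\frk_\Delta} + \|\rho_c\|^2 - \|\rho\|^2$, which in the Harish-Chandra picture is exactly the $\rho$-translation driving the Weyl-group conclusion.

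The main obstacle is the associated-graded/Koszul step: one must verify that \emph{every} $z \in Z(\frg)$ can be replaced, modulo elements of the form $Da + aD$, by an element of the diagonal copy $U(\frk_\Delta)^K$. This is the technical heart of \cite{HP} and requires a delicate filtered induction together with a firm grip on the $\frk$-harmonics inside $S(\frg)$. Everything after that — the $\wt K$-invariance of $D$, the identification of $\zeta$ on the Casimir, and the matching of infinitesimal characters — is a formal consequence.
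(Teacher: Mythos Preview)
The paper does not prove this theorem; it is quoted as the main result of Huang--Pand\v zi\'c \cite{HP} and used as a black box throughout. Your sketch is a faithful outline of the original argument in \cite{HP}: the diagonal embedding $\frk_\Delta$, the key identity $z\otimes 1=\zeta(z)+Da+aD$ for $z\in Z(\frg)$, the passage to the associated graded algebra where the symbol of $D$ becomes a Koszul differential, and the use of Kostant's harmonics are exactly the ingredients of that proof.

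One small point to tighten: you write that to pin down $\zeta$ on Harish-Chandra parameters ``one checks a single generator''. Since $Z(\frg)$ is a polynomial ring in $l=\mathrm{rank}\,\frg$ generators, the Casimir alone does not determine an algebra homomorphism out of $Z(\frg)$. What \cite{HP} actually shows is that, after applying the Harish-Chandra isomorphisms on both sides, $\zeta$ becomes the natural inclusion $S(\frt_f)^{W(\frg,\frt_f)}\hookrightarrow S(\frt_f)^{W(\frk,\frt_f)}$; this is read off from the associated-graded computation, not from the Casimir. Parthasarathy's formula for $D^2$ is then a consistency check rather than the full identification. With that adjustment your outline is correct.
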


Now we assume that $\pi$ is unitary. Then it is typically infinite-dimensional. Since the Weyl group is finite, the above theorem says that to compute $H_D(\pi)$, it suffices to look at \emph{finitely many} $K$-types of $\pi$. These $K$-types will be analyzed and called \emph{spin-lowest $K$-types} in Section \ref{sec-structure}. We have that $\ker D\cap {\rm Im} D=0$, and  that
$$
H_D(\pi)=\ker D=\ker D^2.
$$
Let us collect the \emph{Dirac series} of $G$---all the irreducible unitary $(\frg, K)$ modules $\pi$ with non-vanishing Dirac cohomology---as $\widehat{G}^d$. Then of course we have
$$
\widehat{G}^d\subset \widehat{G}.
$$
On one hand, as we shall explain in Section \ref{sec-structure}, the Dirac series are exactly the members of $\widehat{G}$ such that Dirac inequality holds on some $K$-types of $\pi$. Thus we can expect Dirac series of $G$ to bear certain nice properties. A recent research announcement of Barbasch and Pand\v zi\'c \cite{BP19} indicates that one can use Dirac series to construct automorphic forms. On the other hand, by Theorem \ref{thm-HP}, Theorem A of \cite{D17}, Theorem 1.2 of \cite{D20} and its further sharpening, we can classify the Dirac series of $G$ using a relatively small workload. This will approximate the entire unitary dual of $G$.

\begin{thm}\label{thm-main} The set $\widehat{E_{8(-24)}}^d$ consists of $211$ FS-scattered representations whose spin-lowest $K$-types are u-small, and $3766$ strings of representations. Moreover, each spin-lowest
$K$-type of any Dirac series of $E_{8(-24)}$ occurs exactly once.
\end{thm}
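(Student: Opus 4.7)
The plan is to reduce Theorem~\ref{thm-main} to a finite computation via the Vogan conjecture (Theorem~\ref{thm-HP}) combined with the sharpened Helgason--Johnson bound announced in the abstract, and then to carry out that computation with the aid of the \texttt{atlas} software. By Theorem~\ref{thm-HP}, if $\pi$ is a Dirac series of $G=E_{8(-24)}$ with infinitesimal character $\Lambda$ and spin-lowest $\widetilde K$-type $E_\gamma$, then $\Lambda = w(\gamma+\rho_c)$ for some $w\in W(\frg,\frt_f)$. Coupled with the sharpened Helgason--Johnson bound, this confines $\Lambda$ to a finite, explicit set $\Omega$ of dominant weights in $\frt_f^*$, and already restricts the u-small case to a tractable enumeration.

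First, for each $\Lambda\in\Omega$ I would use \texttt{atlas} to list the irreducible unitary $(\frg,K)$-modules with infinitesimal character $\Lambda$, compute the low-energy portion of their $\widetilde K$-spectrum far enough to detect the spin-lowest $\widetilde K$-types, and test equality in the Dirac inequality \eqref{Dirac-inequality-original}. Those passing the test are precisely the Dirac series whose spin-lowest $\widetilde K$-types are u-small; these provide the candidates for the FS-scattered list, to be counted and found to total $211$ after eliminating duplications across Weyl-group translates of $\Lambda$.

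Second, I would invoke the structure results of~\cite{D17} and~\cite{D20}, which present every non-FS-scattered Dirac series as a cohomological induction of an FS-scattered representation of a proper $\theta$-stable Levi subgroup $L\subset G$, twisted by a continuous parameter on the center of $L$. Running the analogous u-small search for each such $L$ (the list of $\theta$-stable Levis of $E_{8(-24)}$ being explicit), cohomologically inducing, and retaining only those inductions that remain irreducible and unitary and have non-vanishing Dirac cohomology after the shift, would produce the $3766$ strings. The uniqueness claim reduces to a final bookkeeping step: record every spin-lowest $\widetilde K$-type occurring across the $3977$ representations and confirm that no two coincide.

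The principal obstacle is sheer computational scale. The Weyl group of $E_8$ has order about $7\times 10^8$; many of the candidate infinitesimal characters in $\Omega$ are singular and force processing of large \texttt{atlas} blocks; and the collection of proper $\theta$-stable Levi subgroups is substantial. The sharpening of the Helgason--Johnson bound is what makes $\Omega$ small enough to be handled at all---the classical 1969 bound alone yields a candidate set well out of reach for a linear exceptional group of this size. Equally delicate is the cancellation phenomenon flagged in the keywords: when multiple $\widetilde K$-types of nearly the same norm compete to realize the Dirac equality, apparent coincidences must be scrutinized in order to protect both the uniqueness assertion and the exact counts $211$ and $3766$.
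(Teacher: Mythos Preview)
Your overall strategy—restrict the infinitesimal character via Theorem~\ref{thm-HP} and the sharpened Helgason--Johnson bound, enumerate fully supported unitary representations at the surviving $\Lambda$'s with \texttt{atlas}, and recover the strings by cohomological induction from proper $\theta$-stable Levis following \cite{D17,D20}—is essentially the paper's approach, and the discussion of computational scale is apt.

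However, you have misread the multiplicity-one clause. The assertion ``each spin-lowest $K$-type of any Dirac series occurs exactly once'' is \emph{not} a statement that distinct Dirac series have distinct spin-lowest $K$-types. It says that within each fixed Dirac series $\pi$, every spin-lowest $K$-type appears in $\pi$ with multiplicity one. (A single $\pi$ may have several spin-lowest $K$-types—see for instance the tables in the Appendix, where some entries list up to ten of them—and the $3766$ strings comprise infinitely many representations, so there is no list of ``$3977$ representations'' to cross-compare.) The verification is a branching computation: for each $\pi$ one lists its $K$-types up to the height dictated by Theorem~\ref{thm-HP}, identifies the spin-lowest ones, and reads off their multiplicities. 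This is what the paper does, and it is a different check from the one you describe.

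A secondary point: ``FS-scattered'' means the KGB element is fully supported, not ``spin-lowest $K$-types are u-small''. The u-small property is an \emph{outcome} of the search, stated as part of the theorem; it is not the filter that defines the FS-scattered list. If you used u-smallness as the selection criterion you would, a priori, risk missing FS-scattered Dirac series with u-large spin LKTs—one has to first find all fully supported Dirac series and then observe that their spin LKTs turn out to be u-small.
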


The basic tools leading to the above theorem are \texttt{atlas} \cite{ALTV} and a finite way of presenting the infinite set $\widehat{G}^d$ \cite{D20}. Both of them, in particular, the notion of FS-scattered representation, will be introduced in Section \ref{sec-pre}. The notion unitarily small (\emph{u-small} for short) $K$-type was due to Salamanca-Riba and Vogan \cite{SV}, and that of spin-lowest $K$-type was due to the second author. Both of them will be recalled in Section \ref{sec-structure}.

The paper is organized as follows: Section 2 prepares some preliminaries, while Section \ref{sec-structure} reviews the structure of $E_{8(-24)}$. Section \ref{sec-HJ} further sharpens the Helgason-Johnson bound for
$E_{8(-24)}$, then Section \ref{sec-EIX-DS} illustrates how to pin down all its Dirac series.
Section \ref{sec-examples} gives a few examples. Finally, Section \ref{sec-appendix} is an appendix presenting all the FS-scattered representations.

\section{Preliminaries}\label{sec-pre}

We continue with the notations in the introduction.

\subsection{A brief introduction to the software \texttt{atlas}}
A recent important progress on understanding $\widehat{G}$ is an algorithm  testing the unitarity of any irreducible $(\frg, K)$ module. It is given by Adams, van Leeuwen, Trapa and Vogan \cite{ALTV}, and has software realization \texttt{atlas}. This section aims to briefly introduce \texttt{atlas}. More details will be recalled later when necessary.

A basic question in using \texttt{atlas} is how to get some irreducible admissible $(\frg, K)$ modules $\pi$. A quick way to do this is using infinitesimal character. For instance, the following command lists all the irreducible representations of $SU(2, 1)$ having infinitesimal character $\rho(G)=[2, 1]$.
\begin{verbatim}
G:SU(2,1)
set all=all_parameters_gamma(G, rho(G))
void: for p in all do prints(p, "  ",support(x(p)), " ", is_unitary(p)) od
final parameter(x=5,lambda=[2,1]/1,nu=[2,1]/1)  [0,1]  true
final parameter(x=4,lambda=[2,1]/1,nu=[1,-1]/2) [0]    true
final parameter(x=3,lambda=[2,1]/1,nu=[1,2]/2)  [1]    true
final parameter(x=2,lambda=[2,1]/1,nu=[0,0]/1)  []     true
final parameter(x=1,lambda=[2,1]/1,nu=[0,0]/1)  []     true
final parameter(x=0,lambda=[2,1]/1,nu=[0,0]/1)  []     true
\end{verbatim}
One sees that the Langlands parameter of $\pi$ has three parts: a KGB element \texttt{x}, vectors $\lambda$ and $\nu$.

Firstly, let us explain the KGB element. Let $G(\bbC)$ be a complex connected simple algebraic group with finite center. Let $\sigma$ be a real form of $G(\bbC)$. Namely, $\sigma$ is an antiholomorphic Lie group automorphism of $G(\bbC)$ such that $\sigma^2={\rm Id}$. Let $\theta$ be the involutive algebraic automorphism of $G(\bbC)$ corresponding to $\sigma$ via the Cartan theorem (see Theorem 3.2 of \cite{ALTV}). Let $H(\bbC)$ be a maximal connected abelian subgroup of $G(\bbC)$ consisting of diagonalizable matrices. Let
$$
X^*:={\rm Hom}_{alg}(H(\bbC), \bbC^\times)
$$
be the character lattice of $H(\bbC)$, which is the group of algebraic homomorphisms from $H(\bbC)$ to $\bbC$. Choose a Borel subgroup $B(\bbC)$ of $G(\bbC)$ containing $H(\bbC)$. Let $l=\dim_{\bbC}\, \frh$ and let $0, 1, 2, \dots, l-1$ be a labeling of the simple roots of $\Delta(\frb, \frh)$. Let $K(\bbC)$ be $G(\bbC)^\theta$. Now a KGB element \texttt{x} stands for a $K(\bbC)$-orbit of the Borel variety $G(\bbC)/B(\bbC)$. A KGB element \texttt{x} carries a lot of information. Here we only mention that the command \texttt{support(x)} gives its support, which is a subset of $\{0, 1, \dots, l-1\}$. Moreover, the command $\texttt{involution(x)}$ gives an $l\times l$ matrix $\theta_{\texttt{x}}$ which specializes the Cartan involution $\theta$. The vector $\lambda$ lives in $X^*+\rho$ and $\nu\in (X^*)^{-\theta}\otimes_{\bbZ}\bbC$.

Now the representation $\pi$ is presented by a final parameter $(\texttt{x}, \lambda, \nu)$. We say $\pi$ is \emph{fully supported} (\emph{FS} for short) if $\texttt{support(x)}=\{0, 1, \dots, l-1\}$. Note that the infinitesimal character of $\pi$ reads as
\begin{equation}\label{inf-char-atlas}
\Lambda=\frac{1}{2}(1+\theta_{\texttt{x}})\lambda +\nu\in\frh^*.
\end{equation}
One sees that there are six irreducible representations of $SU(2, 1)$ with infinitesimal character $\rho(G)$, among which one is fully supported.

The online seminars \cite{At} give a  very good introduction to the ongoing project \texttt{atlas}.

\subsection{Presenting the infinite set $\widehat{G}^d$}
As in the $SU(2, 1)$ example in the previous section, \texttt{atlas} organizes the representations with a given infinitesimal character according to their supports. A result of Voan \cite{Vog84}, rephrased by Paul in the \texttt{atlas} language \cite{Paul}, says that whenever $\pi$ is not fully supported,  it can be cohomologically induced from a representation $\pi_{L(\texttt{x})}$ from the weakly good range in a canonical way. Here $\pi_{L(\texttt{x})}$ is the Levi subgroup of the $\theta$-stable parabolic subalgebra
$$
\frq(\texttt{x})=\frl(\texttt{x})+\fru(\texttt{x})
$$
defined by the pair $(\texttt{support(x)}, \texttt{x})$, and the representation $\pi_{L(\texttt{x})}$ has final parameter $(\texttt{y}, \lambda-\rho(\fru(\texttt{x})), \nu)$, where \texttt{y} is the unique KGB element of $L(\texttt{x})$ corresponding to the KGB element \texttt{x} of $G$ and $\rho(\fru(\texttt{x}))$ is the half-sum of roots in $\Delta(\fru(\texttt{x}), \frh_f)$.

Let us continue with the previous example, and illustrate how to find the inducing representation $\pi_{L(\texttt{x})}$ from $\pi$:
\begin{verbatim}
set p=all[1]
p
Value: final parameter(x=4,lambda=[2,1]/1,nu=[1,-1]/2)
set (Q,q)=reduce_good_range(p)
set L=Levi(Q)
Q
Value: ([0],KGB element #4)
L
Value: connected quasisplit real group with Lie algebra 'sl(2,R).u(1)'
q
Value: final parameter(x=2,lambda=[1,-1]/2,nu=[1,-1]/2)
rho_u(Q)
Value: [ 3, 3 ]/2
goodness(q,G)
Value: "Good"
q=trivial(L)
Value: true
\end{verbatim}
Therefore, as guaranteed by the main result of Salamanca-Riba \cite{Sa}, the representation \texttt{p} is an $A_{\frq}(\lambda)$ module.

It turns out that one can also organize the Dirac series $\widehat{G}^d$ according to their supports. Indeed, we call $\pi\in \widehat{G}^d$ \emph{FS-scattered} if it is fully supported. As shown in \cite{D20}, there are \emph{finitely many} FS-scattered representations of $G$, and we can organize those non-fully-supported Dirac series of $G$ into finitely many \emph{strings}, with each representation in a string having the same KGB element and the same $\nu$. The representations in a string come from \emph{one} FS-scattered representation of $L(\texttt{x})$ (tensored with varying unitary characters) via the canonical cohomological induction described above. In particular, for any fixed $G$, we can present its Dirac series in a finite way.

It is interesting to note that even for classical groups such as \cite{BDW} and\cite{DW}, the above method is helpful: one determines the small rank cases, and then tries to find the general pattern and give a rigorous proof.

\section{The structure of $E_{8(-24)}$}\label{sec-structure}

Although many results quoted in this section work in a much wider setting, to focus on the main target, we fix $G$ as the connected simple linear real exceptional Lie group \texttt{E8\_q} in \texttt{atlas} henceforth. It has trivial center. It is \emph{not} simply connected. Indeed,
\begin{verbatim}
set K=K_0(G)
K
Value: compact connected real group with Lie algebra 'e7.su(2)'
print_Z(K)
Group is semisimple
center=Z/2Z
\end{verbatim}
The output says that $K$ has center $\bbZ/2\bbZ$. However, its universal covering group $\widetilde{K}$ has center $\bbZ/2\bbZ \times \bbZ/2\bbZ$. Therefore, $\widetilde{K}$ is a double cover of $K$. Thus by the Cartan decomposition $G=K\exp(\frp_0)$, the universal covering group $\widetilde{G}$ is also a double cover of $G$.

The group $G$ is equal rank, i.e., $\frt_f=\frh_f$. The Lie algebra $\frg_0$ is denoted by \texttt{EIX} in Appendix C of \cite{Kn}. We fix a Vogan diagram of $\frg_0$ as in Fig.~\ref{Fig-EIX-Vogan}. In particular, we have fixed a positive root system $(\Delta^+)^{(0)}(\frg, \frh_f):=\Delta^+(\frg, \frh_f)$ for $\Delta(\frg, \frh_f)$. Here the simple roots are $\alpha_1=\frac{1}{2}(1, -1,-1,-1,-1,-1,-1,1)$, $\alpha_2=e_1+e_2$ and $\alpha_i=e_{i-1}-e_{i-2}$ for $3\leq i\leq 8$. Let $\zeta_1, \dots, \zeta_8$ be the corresponding fundamental weights. We will use them as a basis to express the \texttt{atlas} parameters $\lambda$, $\nu$ and the infinitesimal character. More precisely, in such cases, $[a, b, c, d, e, f, g, h]$ stands for the vector $a\zeta_1+\cdots+ h \zeta_8$.

Note that
$$
(\Delta^+)^{(0)}(\frg, \frh_f)=\Delta^+(\frk, \frh_f)\cup (\Delta^+)^{(0)}(\frp, \frh_f).
$$
We emphasize that $\alpha_8$, the unique painted root, is non-compact. In other words, $\alpha_8\in (\Delta^+)^{(0)}(\frp, \frt_f)$. All the other simple roots are compact. Let $\beta=2\alpha_1+3\alpha_2+ 4\alpha_3+ 6\alpha_4+ 5\alpha_5+ 4\alpha_6+ 3\alpha_7 +\alpha_8$. Then it is the highest root of $(\Delta^+)^{(0)}(\frp, \frt_f)$, and $\frp\cong V_{\beta}$ as $\frk$ modules. Note that
$$
-\dim \frk +\dim \frp=-136+112=-24.
$$
Thus the group $G$ is also called $E_{8(-24)}$in the literature.

\begin{figure}[H]
\centering
\scalebox{0.65}{\includegraphics{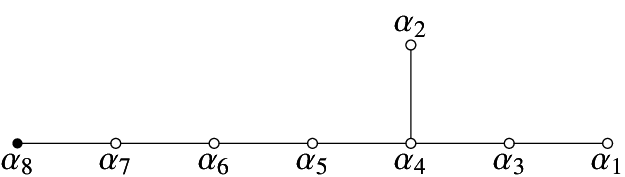}}
\caption{The Vogan diagram for $E_{8(-24)}$}
\label{Fig-EIX-Vogan}
\end{figure}

We fix a Dynkin diagram for $\Delta^+(\frk, \frt_f)$ in Fig.~\ref{Fig-EIX-Dynkin}, where the simple roots $\gamma_i=\alpha_i$ for $1\leq i\leq 7$ and $\gamma_8=\beta +\alpha_8$. Let $\varpi_1, \dots, \varpi_8$ be the corresponding fundamental weights. Let $\rho_c$ be the half-sum of roots in $\Delta^+(\frk, \frt_f)$. An irreducible representation of $\frk$, that is a $\frk$-type, is parameterized by its highest weight
$$
\mu=a\varpi_1+b\varpi_2+c\varpi_3+d\varpi_4+e\varpi_5+f\varpi_6+g\varpi_7+h\varpi_8,
$$
where $a,b,\dots, h$ are non-negative integers. We will write
\begin{equation}\label{k-type-hwt}
\mu=[a, b, c, d, e, f, g, h]
\end{equation}
for short. That is, we use $\varpi_1, \dots, \varpi_8$ as a basis to express the highest weight of a $\frk$-type.  For instance, we have that
$$
\rho_c=[1,1,1,1,1,1,1,1], \quad \beta=[0, 0, 0, 0, 0, 0, 1, 1].
$$
Similar notation applies to $K$-types and $\widetilde{K}$-types.

\begin{figure}[H]
\centering
\scalebox{0.6}{\includegraphics{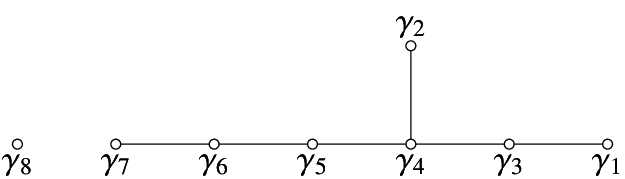}}
\caption{The Dynkin diagram for $\Delta^+(\frk, \frt_f)$}
\label{Fig-EIX-Dynkin}
\end{figure}

Note that a $\frk$-type $\mu=[a, b, \dots, h]$ becomes a $K$-type if and only if
\begin{equation}\label{K-type-condition}
b+e+g+h \mbox{ is even. }
\end{equation}
Since the longest element $w_0^K$ of $W(\frk, \frt_f)$ equals $-1$, the contragredient $\frk$-type of $E_{\mu}$ is $E_{\mu}$ itself.

One can shift the coordinates of a $K$-type from the \texttt{atlas} fashion to that of \eqref{k-type-hwt} as follows: use \texttt{KGB(G, 119)} to obtain the highest weight $(y_1, y_2, \dots, y_8)$ of a $K$-type. Then it becomes
\begin{equation}\label{k-type-hwt-from-atlas}
[y_1, y_2, y_3, y_4, y_5, y_6, y_7,2 y_1+3 y_2+4 y_3+6 y_4+5 y_5+4 y_6+3 y_7+2 y_8].
\end{equation}
For instance,
\begin{verbatim}
G:E8_q
set p=parameter(G,67078,[2,3,3,-2,1,1,3,0],[8,5,5,-5,0,0,5,5]/2)
print_branch_irr_long(p,KGB(G,119),300)
m  x    lambda                 hw                             dim  height
1 669  [0,1,1,-1,1,-1,1,0] KGB element #119 [0,0,0,0,0,0,0,4] 9    211
1 398  [0,1,1,-1,1,0,0,0]  KGB element #119 [0,0,0,0,0,0,1,3] 560  266
\end{verbatim}
The above $K$-types are $[0,0,0,0,0,0,0,8]$ and $[0,0,0,0,0,0,1,9]$, respectively.

\subsection{Positive root systems and the set $W(\frg, \frt_f)^1$}
Let $\caC^{(0)}_\frg$ (resp., $\caC$) be the closed dominant Weyl chamber for $(\Delta^+)^{(0)}(\frg, \frh_f)$(resp., $\Delta^+(\frk, \frh_f)$). Denote
\begin{equation}\label{W1}
W(\frg, \frt_f)^1=\{w\in W(\frg, \frt_f)\mid w \caC^{(0)}_\frg\subseteq \caC\}.
\end{equation}
By a result of Kostant \cite{Ko}, the multiplication map gives a bijection from $W(\frg, \frt_f)^1 \times W(\frk, \frt_f)$ onto $W(\frg, \frt_f)$. In particular, if follows that
$$
\#W(\frg, \frt_f)^1=\frac{\#W(\frg, \frt_f)}{\# W(\frk, \frt_f)}=\frac{2^{14}\cdot 3^5 \cdot 5^2 \cdot 7}{2^{10}\cdot 3^4 \cdot 5 \cdot 7 \times 2}=120.
$$
Let us enumerate the elements of $W(\frg, \frt_f)^1$ as $w^{(0)}=e$, $w^{(1)}, \dots, w^{(119)}$. Then all the positive root systems of $\Delta(\frg, \frt_f)$ containing $\Delta^+(\frk, \frt_f)$ are exactly
$$
(\Delta^+)^{(j)}(\frg, \frt_f):=\Delta^+(\frk, \frt_f) \cup w^{(j)} (\Delta^+)^{(0)}(\frp, \frt_f), \quad 0\leq j\leq 119.
$$
Let us denote by $\rho_n^{(j)}$ the half sum of positive roots in $w^{(j)} (\Delta^+)^{(0)}(\frp, \frt_f)$ and put $\rho^{(j)}=\rho_c+\rho_n^{(j)}$.

\subsection{Lambda norm and infinitesimal character}
Given a $K$-type $E_{\mu}$, choose an index $j$ such that $\mu+2\rho_c\in\caC^{(j)}_\frg$. Put
\begin{equation}\label{def-lambdamu}
\lambda_a(\mu)=P(\mu+2\rho_c-\rho^{(j)}),
\end{equation}
which stands for the unique point in $\caC^{(j)}_\frg$ that is closest to $\mu+2\rho_c-\rho^{(j)}$. It turns out $\lambda_a(\mu)$ is well-defined. That is, it is independent of the choice of an allowable index $j$. Moreover, the \emph{lambda norm} due to Vogan \cite{V81} can be equivalently described \cite{Ca} as
\begin{equation}\label{def-lambdanorm}
\|\mu\|_{\rm lambda}=\|\lambda_a(\mu)\|.
\end{equation}
Now the lambda norm of an irreducible $(\frg, K)$ module $\pi$ is defined as
$$
\|\pi\|_{\rm lambda}:=\min_{\mu} \|\mu\|_{\rm lambda},
$$
where $\mu$ runs over all the $K$-types of $\pi$. We call $\mu$ a \emph{lowest $K$-type} of $\pi$ (LKT for short) if it occurs in $\pi$ and $\|\mu\|_{\rm lambda}=\|\pi\|_{\rm lambda}$.

Let $\mu$ be one of the LKTs of $\pi$, then a representative of the infinitesimal character of $\pi$ can be chosen as
\begin{equation}\label{inf-char}
\Lambda=(\lambda_a(\mu), \nu)\in \frh^*=\frt^*+\fra^*.
\end{equation}
Here $H=TA$ is a maximally split $\theta$-stable Cartan subgroup of the isotropy subgroup $G(\lambda_a(\mu))$. Note that $\frt\subset \frt_f$.

\subsection{Spin norm and Dirac inequality}
The \emph{spin norm} of a $K$-type $E_{\mu}$ \cite{D13} now specializes as
\begin{equation}\label{def-spin-norm}
\|\mu\|_{\rm spin}:=\min_{0\leq j\leq 119} \|\{\mu-\rho_n^{(j)}\}+\rho_c\|
\end{equation}
Here $\{\mu-\rho_n^{(j)}\}$ is the unique element in $\caC$ to which $\mu-\rho_n^{(j)}$ is conjugate under the action of $W(\frk, \frt_f)$. Now the spin norm of an irreducible $(\frg, K)$ module $\pi$ is defined as
$$
\|\pi\|_{\rm spin}:=\min_{\mu} \|\mu\|_{\rm spin},
$$
where $\mu$ runs over all the $K$-types of $\pi$. We call $\mu$ a \emph{spin-lowest $K$-type} of $\pi$ (spin LKT for short) if it occurs in $\pi$ and $\|\mu\|_{\rm spin}=\|\pi\|_{\rm spin}$.

Given a $K$-type $E_{\mu}$, using knowledge of the PRV component \cite{PRV}, one has that
$$
\|\gamma+\rho_c\| \geq \|\mu\|_{\rm spin},
$$
where $\gamma$ is the highest weight of any $\widetilde{K}$-type of $E_{\gamma}\otimes S_G$.
Assume that the $(\frg, K)$ module $\pi$ is \emph{unitary}.
The Dirac inequality \eqref{Dirac-inequality-original} can now be rephrased as follows:
\begin{equation}\label{Dirac-inequality}
\|\mu\|_{\rm spin} \geq \|\Lambda\|,
\end{equation}
where $\mu$ is the highest weight of any $K$-type of $\pi$. Therefore,
$$
\|\pi\|_{\rm spin}\geq \|\Lambda\|
.
$$
Moreover, it follows from Theorem 3.5.12 of \cite{HP2} that
$H_D(\pi)$ is non-zero if and only if $\|\pi\|_{\rm spin}=\|\Lambda\|$, and in such a case, it is exactly the spin LKTs that contribute to $H_D(\pi)$.

\subsection{Vogan pencil and the u-small convex hull}
By a result of Vogan \cite{Vog80}, since $G/K$ is not Hermitian symmetric, if $\mu$ is a $K$-type occurring in any \emph{infinite-dimensional} irreducible $(\frg, K)$ module $\pi$, then the $K$-type $\mu+n\beta$ shall occur in $\pi$ for any non-negative integer $n$. We call
\begin{equation}
P(\mu):=\{\mu+n\beta\mid n\in\bbZ_{\geq 0}\}
\end{equation}
the \emph{Vogan pencil} starting from $\mu$. We also put
\begin{equation}
{\rm MP}(\mu):=\min_{n\in\bbZ_{\geq 0}} \|\mu+n\beta\|_{\rm spin}
\end{equation}
Namely, ${\rm MP}(\mu)$ is the minimum spin norm of all the $K$-types on $P(\mu)$.

As in \cite{SV},
we call a $K$-type $E_{\mu}$ \emph{u-small} if $\mu$ lies in the convex hull generated by the points $2w\rho_n^{(j)}$, where $w\in W(\frk, \frt_f)$ and $0\leq j\leq s-1$. Otherwise, we will say that $\mu$ is \emph{u-large}.

By Section 6.7 of \cite{D20}, when $\mu$ is u-large, one has ${\rm MP}(\mu)=\|\mu\|_{\rm spin}$; when $\mu$ is u-small, to obtain ${\rm MP}(\mu)$, it suffices to compute the spin norm of those (finitely many) $\mu+n\beta$ which are u-small.

\begin{example}\label{exam-pencil}
Consider $P(0)$, the Vogan pencil starting from the trivial $K$-type. We have that $15\beta$ is u-small, while $16\beta$ is u-large. The spin norms of $n\beta$ for $0\leq n\leq 17$ are as follows:
\begin{align*}
&\sqrt{620}, \sqrt{564}, \sqrt{512}, \sqrt{464}, \sqrt{420}, \sqrt{380}, \sqrt{348}, \sqrt{320}, \sqrt{296}, \sqrt{276}, \\
&\sqrt{260}, \sqrt{300}, \sqrt{344}, \sqrt{392}, \sqrt{444}, \sqrt{500}, \sqrt{560}, \sqrt{624}.
\end{align*}
Therefore, ${\rm MP}(0)=\sqrt{260}$ and it is attained at $10\beta$.
\hfill\qed
\end{example}
\begin{rmk}\label{rmk-exam-pencil}
When $\mu$ is u-small, we guess that the spin norm  should decrease firstly, and then increase along $P(\mu)$.
\end{rmk}

\section{Sharpening the the Helgason-Johnson bound for $E_{8(-24)}$}\label{sec-HJ}
\begin{prop}\label{prop-previous-HJ}
Let $G$ be $E_{8(-24)}$. Let $\pi$ be an irreducible unitary $(\frg, K)$ module  whose infinitesimal character $\Lambda$ is given as \eqref{inf-char}. Then
\begin{itemize}
\item [(a)] $\|\nu\|\leq \sqrt{620}=\|\rho(G)\|$;
\item [(b)] $\|\nu\|\leq \sqrt{\frac{723}{2}}$ if $\pi$ is infinite-dimensional.
\end{itemize} 	
\end{prop}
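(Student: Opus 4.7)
Part (a) is the classical Helgason--Johnson bound (1969). The explicit constant comes from the Freudenthal--de Vries strange formula
$$
\|\rho(G)\|^2 \;=\; \frac{h^\vee\,\dim\frg}{12} \;=\; \frac{30\cdot 248}{12} \;=\; 620,
$$
using $h^\vee=30$ and $\dim\frg=248$ for $E_8$.

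For part (b), the plan is to combine Parthasarathy's Dirac inequality \eqref{Dirac-inequality} with Vogan's pencil theorem. Let $\pi$ be an infinite-dimensional irreducible unitary $(\frg,K)$-module, with LKT $\mu$ and infinitesimal character $\Lambda=(\lambda_a(\mu),\nu)$ decomposed as in \eqref{inf-char}. Then $\|\Lambda\|^2=\|\mu\|_{\rm lambda}^2+\|\nu\|^2$, and Dirac inequality applied to any $K$-type $\sigma$ of $\pi$ yields
$$
\|\nu\|^2 \;\leq\; \|\sigma\|_{\rm spin}^2 - \|\mu\|_{\rm lambda}^2.
$$
Since $G/K$ is not Hermitian symmetric, Vogan's theorem forces $P(\mu)\subset\pi$, and minimising the right-hand side over $\sigma\in P(\mu)$ gives the key inequality
$$
\|\nu\|^2 \;\leq\; {\rm MP}(\mu)^2 - \|\mu\|_{\rm lambda}^2.
$$

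To turn this into a uniform bound, I would restrict to u-small $\mu$. By Salamanca-Riba's theorem \cite{Sa}, every irreducible unitary $(\frg,K)$-module admits a u-small LKT, so without loss of generality $\mu$ is u-small. The u-small $K$-types of $E_{8(-24)}$ form a finite, explicit list enumerable in \texttt{atlas}. For each such $\mu$, Section 6.7 of \cite{D20} reduces the calculation of ${\rm MP}(\mu)^2$ to scanning the (finite) u-small portion of $P(\mu)$, while $\|\mu\|_{\rm lambda}^2$ is read off directly from \eqref{def-lambdanorm}. Part (b) then reduces to verifying
$$
\max_{\mu\ \text{u-small}} \bigl({\rm MP}(\mu)^2 - \|\mu\|_{\rm lambda}^2\bigr) \;=\; \tfrac{723}{2},
$$
together with exhibiting a u-small $\mu$ at which this maximum is attained.

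The main obstacle is the sheer combinatorial scale: the u-small $K$-types of $E_{8(-24)}$ form a sizable finite set, and for each one the spin- and lambda-norm data along the Vogan pencil must be assembled before the maximum can be extracted. This bookkeeping will be carried out via \texttt{atlas}; once the maximising $\mu$ is pinpointed, the precise constant $\sqrt{723/2}$ should emerge, together with a concrete extremal $K$-type that governs the sharpening.
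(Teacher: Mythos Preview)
Your overall strategy for (b)---Dirac inequality plus Vogan pencil, then a finite search---is the right shape, but there is a genuine gap in how you reduce to u-small LKTs. You invoke \cite{Sa} to assert that every irreducible unitary $(\frg,K)$-module has a u-small LKT; that is \emph{not} what Salamanca-Riba proved. Her theorem concerns modules with strongly regular infinitesimal character and identifies them as $A_\frq(\lambda)$ modules. The statement that every unitary representation has a u-small LKT is essentially the Salamanca-Riba--Vogan conjecture of \cite{SV}, which was open for $E_{8(-24)}$ when \cite{D20} was written (and is still not available as a black box for this group). So as written your argument has no control over representations whose LKT is u-large, and the proof is incomplete.

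The paper does not prove Proposition~\ref{prop-previous-HJ} itself; it cites \cite{HJ} for (a) and \cite{D20} for (b). The argument of \cite{D20}, visible in the proof of Proposition~\ref{prop-HJ} here, closes exactly the gap you left: rather than assuming the LKT is u-small, one treats the two cases separately. For u-large $\mu$ there are constants $A_j$ (defined in Section~3 of \cite{D20}, one for each $w^{(j)}\in W(\frg,\frt_f)^1$) giving a uniform estimate
\[
\|\mu\|_{\rm spin}^2-\|\mu\|_{\rm lambda}^2\ \le\ \max_{0\le j\le 119} A_j\ =\ 212,
\]
which is already below $\tfrac{723}{2}$. Only then does one pass to the finite list of u-small $K$-types and compute directly. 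If you replace your appeal to \cite{Sa} with this u-large bound, your outline becomes correct and matches the route taken in \cite{D20}.
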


Item (a) above originates from Helgason and Johnson \cite{HJ} in 1969. Item (b) is obtained in \cite{D20} recently.  To save time in doing non-unitarity test, let us enhance it further by adopting suitable assumptions. We say the infinitesimal character $\Lambda=[a, b, \dots, h]=a\xi_1+b\xi_2+\cdots+h\xi_8$ is \textbf{HP integral} if $a, b, \dots, h\in\bbZ_{\geq 0}$, and that
\begin{itemize}
\item[$\bullet$] $a+c>0$, $b+d>0$, $c+d>0$, $d+e>0$, $e+f>0$, $f+g>0$, $g+h>0$;
\item[$\bullet$] $d+f+h>0$, $a+b+e+g>0$, $a+b+e+h>0$, $a+b+f+h>0$;
\item[$\bullet$] $b+c+e+g>0$, $b+c+e+h>0$, $b+c+f+h>0$.
\end{itemize}
The above terminology is justified as follows: If $\Lambda$ is not HP integral, then by Theorem \ref{thm-HP}, any irreducible $(\frg, K)$ module $\pi$ with infinitesimal character $\Lambda$ (if exists) will not be a Dirac series. The specific conditions above are deduced by knowledge of $W(\frg, \frt_f)^1$. Indeed, we firstly express $w^{(j)}\Lambda$ in terms of $\varpi_1, \varpi_2, \cdots, \varpi_l$ for all $w^{(j)}\in W(\frg, \frt_f)^1$. These $120$ vectors turn out to be
\begin{align*}
&[a,b,c,d,e,f,g, 2a+3b+4c +6d+5e+4f+3g+2h],\\
&[a,b,c,d,e,f,g+h, 2a+3b+4c+6d+5e+4f+3g+h],\\
&\dots,\\
&[f,b,e,d,c,a,b+c+2d+2e+2f+2g+h,h].
\end{align*}
Then we check that they \emph{all} have zero coordinate(s) whenever any one of the above conditions, say $a+c>0$, fails. To sum up, to search for the Dirac series, it suffices to consider HP integral infinitesimal characters.
	
Before moving on, let us single out a few irreducible unitary representations.
\begin{example}\label{exam-HJ}
The following three irreducible representations are all unitary:
\begin{verbatim}
G:E8_q
set p1=parameter(G,65831,[1,4,1,-3,4,0,1,4],[0,9,0,-9,9,-1,2,9]/2),
set p2=parameter(G,67055,[1,2,1,-1,3,-1,4,1],[0,1,0,-1,4,-3,4,1]),
set p3=parameter(G,67078,[3,2,2,-1,1,1,2,1],[8,5,5,-5,0,0,5,5]/2)
\end{verbatim}
All of them have non-zero Dirac cohomology. The statistics $\|\nu\|^2$ for them are $\frac{443}{2}$, $246$ and $\frac{723}{2}$, respectively. Moreover, they have GK dimensions $46$, $46$ and $29$, respectively. Here GK dimension means the Gelfand-Kirillov dimension in \cite{Vog78}. It can be computed in \texttt{atlas} in the following way:
\begin{verbatim}
GK_dim(p3)
Value: 29
\end{verbatim}
Note that $(\rho, \gamma_8^\vee)=29$. Here $\gamma_8$ is as in Figure \ref{Fig-EIX-Dynkin}: it is the highest root of $(\Delta^+)^{(0)}(\frg, \frt_f)$. Indeed, the above \texttt{p3} is the unique minimal representation of $E_{8(-24)}$.

The three representations \texttt{p1},  \texttt{p2} and \texttt{p3} are recorded in Tables \ref{table-EIX-11101011-part1}, \ref{table-EIX-11101011-part1}, and \ref{table-EIX-11101111}, respectively. Note that the \texttt{lambda} parameter of a representation may have different forms. For instance,
\begin{verbatim}
set q1=parameter(KGB(G,65831),[1,4,1,-3,4,0,1,4], [0,9,0,-9,9,-1,2,9]/2)
p1=q1
Value: true
\end{verbatim} \hfill\qed
\end{example}

Now let us further sharpen the Helgason-Johnson bound.
\begin{prop}\label{prop-HJ}
Let $G$ be $E_{8(-24)}$. Except for the three representations described in Example \ref{exam-HJ}, there is no irreducible unitary $(\frg, K)$ module $\pi$ with a HP integral infinitesimal character $\Lambda$ such that
\begin{equation}\label{further-HJ-bound}
\frac{425}{2}	\leq \|\nu\|^2 \leq \frac{723}{2}.
\end{equation}		
\end{prop}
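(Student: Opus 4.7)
The plan is to reduce the statement to a finite \texttt{atlas} search that rests on Proposition \ref{prop-previous-HJ}(b). First I would parametrize the candidate infinitesimal characters: HP integrality forces $\Lambda=[a,b,c,d,e,f,g,h]$ to have non-negative integer coordinates satisfying the listed positivity conditions, while for any unitary $\pi$ the Dirac inequality \eqref{Dirac-inequality} applied at an LKT $\mu$ of $\pi$ gives $\|\Lambda\|\leq \|\mu\|_{\rm spin}$. Combining this with the orthogonal decomposition $\|\Lambda\|^2=\|\lambda_a(\mu)\|^2+\|\nu\|^2$, the upper bound $\|\nu\|^2\leq 723/2$ from Proposition \ref{prop-previous-HJ}(b), and the coordinate-wise control of $\|\lambda_a(\mu)\|$ afforded by the lambda norm of an LKT, one obtains a uniform bound on $\|\Lambda\|$. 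This cuts the candidates down to a finite, explicit list of HP integral vectors $\Lambda$ which can be generated by a sweep.

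For each such $\Lambda$ the procedure is: invoke \texttt{all\_parameters\_gamma(G,\,Lambda)} to list every irreducible $(\frg,K)$-module with infinitesimal character $\Lambda$, read $\nu$ off the final parameter via \eqref{inf-char-atlas}, discard those with $\|\nu\|^2$ outside the window $[\tfrac{425}{2},\,\tfrac{723}{2}]$, and run \texttt{is\_unitary} on the remainder. The three modules \texttt{p1}, \texttt{p2}, \texttt{p3} of Example \ref{exam-HJ} satisfy $\|\nu\|^2=\tfrac{443}{2},\,246,\,\tfrac{723}{2}$ and are unitary by direct check, so they are guaranteed to survive; the content of the proposition is that no further parameter does. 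To make the exhaustive nature of the search manifest, I would tabulate, for each candidate $\Lambda$, the number of irreducibles, the number with $\|\nu\|^2$ in the target window, and the unitary verdict returned by \texttt{atlas}.

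The main obstacle is computational scale rather than conceptual: \texttt{E8\_q} has $|\texttt{KGB(G)}|=67200$, so \texttt{all\_parameters\_gamma(G,\,Lambda)} may be very large for generic integral $\Lambda$, and \texttt{is\_unitary} is expensive. To keep the run feasible, cheap filters must be applied first --- the HP-integrality positivity conditions (which, as explained after the definition, kill any chance of nontrivial Dirac cohomology) prune the lattice before any \texttt{atlas} call, and the $\nu$-norm window is tested before invoking \texttt{is\_unitary}. A secondary issue is identification of the survivors: two distinct final parameters can represent the same irreducible module, as illustrated by the \texttt{q1=p1} check in Example \ref{exam-HJ}, so the last step is to verify in \texttt{atlas}, by direct comparison of final parameters, that every unitary survivor is equivalent to one of \texttt{p1}, \texttt{p2}, \texttt{p3} and not to a fourth module hidden by a change of \texttt{lambda} representative.
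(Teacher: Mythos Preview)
Your reduction to a finite search has a real gap at the step ``one obtains a uniform bound on $\|\Lambda\|$.'' From the ingredients you list you only get
\[
\|\lambda_a(\mu)\|^2+\|\nu\|^2=\|\Lambda\|^2\leq \|\mu\|_{\rm spin}^2,\qquad \|\nu\|^2\leq \tfrac{723}{2},
\]
and there is no a priori control on $\|\lambda_a(\mu)\|$: an LKT $\mu$ of an arbitrary unitary $\pi$ can have arbitrarily large lambda norm, so $\|\Lambda\|$ is unbounded over the class you are searching. The phrase ``coordinate-wise control of $\|\lambda_a(\mu)\|$ afforded by the lambda norm of an LKT'' is circular---$\|\lambda_a(\mu)\|$ \emph{is} the lambda norm---and does not produce a bound. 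Consequently your sweep over HP integral $\Lambda$ is not finite as written.

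The paper closes this gap by exploiting the \emph{lower} bound $\|\nu\|^2\geq \tfrac{425}{2}$, which forces
\[
\|\mu\|_{\rm spin}^2-\|\mu\|_{\rm lambda}^2\;\geq\;\|\nu\|^2\;>\;212.
\]
Since $\|\mu\|_{\rm spin}^2-\|\mu\|_{\rm lambda}^2\leq 212$ for every u-large $K$-type (this is the bound $\max_j A_j=212$ from \cite{D20}), the LKT $\mu$ must be u-small; among the $294660$ u-small $K$-types only $249$ satisfy the displayed inequality, and for these one computes $14\leq \|\lambda_a(\mu)\|^2\leq 178$. \emph{That} is what makes the set of candidate $\Lambda$ finite (in fact $919$ vectors). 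The paper then adds a second cheap filter you do not have---the Vogan pencil test $\min_{\mu}{\rm MP}(\mu)\geq \|\Lambda\|$ over the LKTs---before ever calling \texttt{is\_unitary}, cutting the list to $4256$ representations. Without both the u-small restriction on LKTs and the pencil pre-filter, your proposed brute-force \texttt{all\_parameters\_gamma} plus \texttt{is\_unitary} loop is neither provably finite nor computationally feasible for $E_{8(-24)}$.
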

	
\begin{proof}
Take a LKT $\mu$ of $\pi$. Then the infinitesimal character of $\pi$ has the form $\Lambda=(\lambda_a(\mu), \nu)$ as in \eqref{inf-char}. By the Dirac inequality \eqref{Dirac-inequality}, we have
		$$
		\|\Lambda\|^2=\|\lambda_a(\mu)\|^2 +\|\nu\|^2\leq \|\mu\|^2_{\rm spin}.
		$$
		Therefore,
		\begin{equation}\label{nu-bound}
			\|\nu\|^2\leq \|\mu\|^2_{\rm spin}- \|\mu\|^2_{\rm lambda}.
		\end{equation}
		As computed in Section 5.10 of \cite{D20}, we have that $\max\,\{A_j\mid 0\leq j\leq 119\}=212$. We refer the reader to Section 3 of \cite{D20} for the precise meaning of these $A_j$. It follows that for any u-large $K$-type $\mu$, one has that
		$$
		\|\mu\|_{\rm spin}^2 - \|\mu\|_{\rm lambda}^2\leq 212.
		$$
Since $\|\nu\|^2\geq 212.5$ by our assumption \eqref{further-HJ-bound}, we conclude from \eqref{nu-bound} that $\mu$ can \emph{not} be u-large.
		
There are $294660$ u-small $K$-types in total. Among them, only $249$ have the property that
$$
212.5\leq \|\mu\|_{\rm spin}^2 - \|\mu\|_{\rm lambda}^2.
$$
Let us collect these $249$ u-small $K$-types as \texttt{Certs}. Some of its members are
$$
[0, 0, 0, 0, 0, 0, 0, 0], [0, 0, 0, 0, 0, 1, 0, 0], [1, 0, 0, 0, 0, 0, 0, 0], \dots, [0, 0, 0, 0, 0, 0, 6, 18].
$$
We compute that
		$14\leq \|\lambda_a(\mu)\|^2\leq 178$ for any $\mu\in \texttt{Certs}$. Therefore,
		\begin{equation}\label{can-Lambda}
			14+ 212.5 \leq \|\Lambda\|^2=\|\lambda_a(\mu)\|^2+\|\nu\|^2\leq 178 + 361.5.
		\end{equation}		
The right hand side above uses item (b) of Proposition \ref{prop-previous-HJ}. There are $919$ HP integral $\Lambda$s meeting the requirement \eqref{can-Lambda}. We collect them as $\Omega$.

		Now a direct search using \texttt{atlas} says that there are $4256$ irreducible  representations $\pi$ such that $\Lambda\in \Omega$, that $\pi$ has a LKT which is a member of \texttt{Certs}, and that
$$
\min_{\mu} {\rm MP}(\mu)\geq \|\Lambda\|,
$$
where $\mu$ runs over all the LKTs of $\pi$. It turns out that only three of them are unitary. They are described in Example \ref{exam-HJ}. This finishes the proof.	
	\end{proof}

\section{Dirac series of $E_{8(-24)}$}\label{sec-EIX-DS}

This section aims to classify the Dirac series of $E_{8(-24)}$. We will organize them into two parts:  finitely many FS-scattered representations, and finitely many strings.
Let $\Lambda=[a, b, c, d, e, f, g, h]$ be a \emph{dominant} infinitesimal character. Denote by $\Pi(\Lambda)$ (resp., $\Pi_{\rm FS}(\Lambda)$) the set of all the (resp. fully supported) irreducible representations with infinitesimal character $\Lambda$. We pick those unitary ones in $\Pi_{\rm FS}(\Lambda)$ as $\Pi_{\rm FS}^{\rm u}(\Lambda)$.

\subsection{FS-scattered representations of $E_{8(-24)}$}\label{sec-FS-EIX}

By the discussions in Section \ref{sec-HJ}, to sieve out all the FS-scattered Dirac series of $E_{8(-24)}$ other than the trivial one and those described in Example \ref{exam-HJ}, it suffices to go through the  irreducible \emph{unitary} representations of $E_{8(-24)}$ with infinitesimal character $\Lambda=[a, b, c, d, e, f, g, h]$ such that
\begin{itemize}
\item[$\bullet$] $\min\{a, b, c, d, e, f, g, h\}=0$;
\item[$\bullet$]  $\Lambda$ is HP integral (see Section \ref{sec-HJ});
\item[$\bullet$] there exists a \emph{fully supported} KGB element \texttt{x} such that $\|\frac{\Lambda-\theta_{\texttt{x}} (\Lambda)}{2}\|^2 <\frac{425}{2}$.
\end{itemize}
Here the first requirement is justified by the main result of Salamanca-Riba \cite{SV}: otherwise $\Lambda$ would be strongly regular and any representation in $\Pi_{\rm FS}^{\rm u}(\Lambda)$ must be an $A_{\frq}(\lambda)$ module which is in the good range, while the third requirement comes from Proposition \ref{prop-HJ}.

We collect those $\Lambda$ as $\Phi$, and denote by $\Phi_N$ the members of $\Phi$ having maximal coordinate $N$. For instance, $\#\Phi_1=51$, $\#\Phi_2=3005$ and $\#\Phi_3=34933$. We present $\Phi_1$ as follows:
\begin{align*}
&[0, 1, 1, 0, 1, 0, 1, 0]; \\
& [0, 0, 1, 1, 0, 1, 1, 1], [0, 0, 1, 1, 1, 0, 1, 1], [0, 0, 1, 1, 1, 1,
0, 1], [0, 0, 1, 1, 1, 1, 1, 0], [0, 1, 1, 0, 1, 0, 1, 1], \\
&[0, 1, 1, 0, 1, 1, 0, 1], [0, 1, 1, 0, 1, 1, 1, 0], [0, 1, 1, 1, 0, 1, 0, 1], [0, 1, 1, 1, 0, 1, 1, 0],[0, 1, 1, 1, 1, 0, 1, 0], \\
&[1, 0, 0, 1, 0, 1, 1, 1], [1, 0, 0, 1, 1, 0, 1, 1], [1, 0, 0, 1, 1, 1, 0, 1], [1, 0, 0, 1, 1, 1, 1, 0],
[1, 0, 1, 1, 0, 1, 0, 1],\\
&[1, 0, 1, 1, 0, 1, 1, 0], [1, 0, 1, 1, 1, 0, 1, 0], [1, 1, 0, 1, 0, 1, 0, 1], [1, 1, 0, 1, 0, 1, 1, 0],
[1, 1, 0, 1, 1, 0, 1, 0],\\
&[1, 1, 1, 0, 1, 0, 1, 0];\\
&[0, 0, 1, 1, 1, 1, 1, 1], [0, 1, 1, 0, 1, 1, 1, 1], [0, 1, 1, 1, 0, 1,
1, 1], [0, 1, 1, 1, 1, 0, 1, 1], [0, 1, 1, 1, 1, 1, 0, 1], \\
&[0, 1, 1, 1, 1, 1, 1, 0], [1, 0, 0, 1, 1, 1, 1, 1], [1, 0, 1, 1, 0, 1, 1, 1], [1, 0, 1, 1, 1, 0, 1, 1], [1, 0, 1, 1, 1, 1, 0, 1], \\
&[1, 0, 1, 1, 1, 1, 1, 0], [1, 1, 0, 1, 0, 1, 1, 1], [1, 1, 0, 1, 1, 0, 1, 1], [1, 1, 0, 1, 1, 1, 0, 1], [1, 1, 0, 1, 1, 1, 1, 0], \\
&[1, 1, 1, 0, 1, 0, 1, 1], [1, 1, 1, 0, 1, 1, 0, 1], [1, 1, 1, 0, 1, 1, 1, 0], [1, 1, 1, 1,
0, 1, 0, 1], [1, 1, 1, 1, 0, 1, 1, 0], \\
&[1, 1, 1, 1, 1, 0, 1, 0];\\
&[0, 1, 1, 1, 1, 1, 1, 1], [1, 0, 1, 1, 1, 1, 1, 1], [1, 1, 0, 1, 1, 1, 1, 1], [1, 1, 1, 0, 1, 1, 1, 1], [1, 1, 1, 1, 0, 1, 1, 1], \\
&[1, 1, 1, 1, 1, 0, 1, 1], [1, 1, 1, 1, 1, 1, 0, 1], [1, 1, 1, 1, 1, 1, 1, 0].
\end{align*}

The following lemma has its morale from Vogan's fundamental parallelepiped conjecture (FPP). See also Conjecture 1.4 of \cite{DDH}, which can be viewed as a weak version FPP. It is worthy noting that the FPP has been confirmed by Wong recently \cite{Wong}.

\begin{lemma}\label{lemma-FPP}
 The set $\Pi_{\rm FS}^{\rm u}(\Lambda)$ of $E_{8(-24)}$ is empty for any $\Lambda\in\Phi_N$ whenever $N\geq 2$.
\end{lemma}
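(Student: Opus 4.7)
The plan is a computer--assisted enumeration in \texttt{atlas}, leveraging the sharpened Helgason--Johnson bound of Proposition~\ref{prop-HJ} together with the Dirac--inequality filter.

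Step one is to confirm that $\bigcup_{N\geq 2}\Phi_N$ is finite and explicit. Each $\Lambda\in\Phi$ is HP integral, has $\min\{a,\dots,h\}=0$, and satisfies $\|\tfrac{1}{2}(\Lambda-\theta_{\texttt{x}}\Lambda)\|^{2}<\tfrac{425}{2}$ for some fully supported KGB element $\texttt{x}$. Iterating over the (finitely many) fully supported KGB elements and combining the resulting quadratic inequalities with HP integrality forces the eight coordinates of $\Lambda$ into a bounded box. I would confirm this in \texttt{atlas} by directly constructing $\Phi_N$ for increasing $N$ and locating an explicit threshold $N_{\max}$ beyond which $\Phi_N$ is empty; the growth pattern $\#\Phi_1=51$, $\#\Phi_2=3005$, $\#\Phi_3=34933,\dots$ suggests that $N_{\max}$ itself is small.

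Step two, for each $\Lambda\in\Phi_N$ with $2\leq N\leq N_{\max}$, is to verify that $\Pi_{\rm FS}^{\rm u}(\Lambda)=\emptyset$. First enumerate $\Pi_{\rm FS}(\Lambda)$ via \texttt{all\_parameters\_gamma(G,$\Lambda$)} followed by the filter \texttt{support(x(p))}$=\{0,\dots,7\}$. Before invoking the expensive \texttt{is\_unitary}, I would apply a cheap spin--norm filter: for each candidate $\pi$ compute a LKT $\mu$ and reject $\pi$ unless $\min_{\mu}{\rm MP}(\mu)\geq \|\Lambda\|$, using the u-small versus u-large dichotomy recalled in Section~\ref{sec-structure} together with the bound $A_{j}\leq 212$ extracted in the proof of Proposition~\ref{prop-HJ}. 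Because $N\geq 2$ forces $\|\Lambda\|$ to be noticeably larger than the spin norms of the relevant u-small $K$-types, I expect this filter to dispose of the overwhelming majority of candidates outright. For the handful that survive, call \texttt{is\_unitary} and verify that every answer is \texttt{false}.

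The principal obstacle will be sheer volume: the total count of parameters to sweep is in the hundreds of thousands, and individual calls to \texttt{is\_unitary} for $E_{8(-24)}$ at higher infinitesimal characters are computationally heavy. To keep the run feasible I would order the $\Lambda$'s by $\|\Lambda\|^{2}$ (smallest first, since the Dirac filter is tightest there) and parallelize across $\Lambda$. A secondary concern is correctness at the margin: any parameter whose $\min_{\mu}{\rm MP}(\mu)$ only slightly exceeds $\|\Lambda\|$ should be double--checked with \texttt{is\_unitary} and, when in doubt, cross--validated against the Langlands data in \texttt{atlas} so as to guard against a hidden unitary point escaping the spin filter.
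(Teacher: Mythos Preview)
Your Step~2 matches the paper's per-$\Lambda$ strategy: use the Dirac inequality on a LKT via the Vogan pencil to discard most of $\Pi_{\rm FS}(\Lambda)$, then call \texttt{is\_unitary} on the survivors. The gap is in Step~1. The condition $\|\tfrac{1}{2}(\Lambda-\theta_{\texttt{x}}\Lambda)\|^{2}<\tfrac{425}{2}$ bounds only the $(-1)$--eigenspace of $\theta_{\texttt{x}}$; since every fully supported KGB element of $E_{8(-24)}$ lies on a Cartan with nontrivial compact part (the real rank is $4$), taking the union over all such \texttt{x} does \emph{not} confine $\Lambda$ to a small box. The growth $51,\,3005,\,34933$ is evidence against, not for, a quick cutoff $N_{\max}$, and the paper explicitly declines to enumerate $\Phi_N$ for $N\geq 4$, calling it ``very challenging''. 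Your heuristic that ``$N\geq 2$ forces $\|\Lambda\|$ to be noticeably larger'' is also unreliable: raising a single coordinate from $1$ to $2$ barely moves $\|\Lambda\|$.

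The idea you are missing is a translation--functor reduction. If $\Lambda$ and $\Lambda'$ have identical zero/nonzero patterns, then $\Pi_{\rm FS}(\Lambda)$ and $\Pi_{\rm FS}(\Lambda')$ are in bijection with the \emph{same} KGB elements; so the \texttt{x}'s that can actually occur for a given zero-pattern are already visible in $\Pi_{\rm FS}$ of the corresponding element of $\Phi_1$, and there are far fewer of them than $51486$ (e.g.\ $1331$ for $[0,1,1,0,1,0,1,0]$, $15428$ for $[1,1,1,1,1,1,1,0]$). Imposing the $\nu$--bound only for these \emph{relevant} \texttt{x} produces, for each of the $51$ zero-patterns, a finite set $\Phi(\cdots)$ (sizes from $3459$ up to $376150$); for any $\Lambda$ with that pattern but outside the corresponding set, every member of $\Pi_{\rm FS}(\Lambda)$ already has $\|\nu\|^{2}\geq\tfrac{425}{2}$ and is ruled out by Proposition~\ref{prop-HJ}. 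Your Step~2 then disposes of each of these $51$ finite sets. Without this reduction the enumeration in Step~1 is not executable.
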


The lemma says that the infinitesimal character of any FS-scattered Dirac series of $E_{8(-24)}$ must lie in $\Phi_1$. All the FS-scattered Dirac series are presented according to their infinitesimal characters in Tables \ref{table-EIX-01101011}--\ref{table-EIX-11111111} in Section \ref{sec-appendix}.

Verifying the lemma is not an easy job for us. Actually, it involved a great amount of calculation thus five authors (and the computers that they can access) are included.

We check Lemma \ref{lemma-FPP} as follows: for any $\Lambda\in \Phi_2 \cup \Phi_3$, print a LKT $\mu$ of each member of $\Pi_{\rm FS}(\Lambda)$, then compute $\min_{n\geq 0} \|\mu+n\beta\|_{\rm spin}$ and compare it with $\|\Lambda\|$. Whenever the former is strictly smaller than the latter, we conclude that $\pi$ is not unitary. This method works well for most cases. Whenever it loses effect, we check the unitarity of $\pi$  using \texttt{atlas} directly.

We do not enumerate $\Phi_N$ for $N\geq 4$: it should be very challenging.  Instead, the following two observations can help us to reduce the workload:
\begin{itemize}
\item[(a)] although there are $51486$ fully supported KGB elements of $E_{8(-24)}$ in total,  many of them will not show up as the KGB element of any member of $\Pi_{\rm FS}(\Lambda)$ when $\Lambda$ is sepecific. For instance,
    $\#\Pi_{\rm FS}([0,1,1,0,1,0,1,0])=1860$ and only $1331$ KGB elements appear; $\#\Pi_{\rm FS}([1,1,1,1,1,1,1,0])= 23001$, and only $15428$ KGB elements appear.
\item[(b)] whenever $\Lambda$ and $\Lambda^\prime$ have the same non-zero coordinates, by translation functor (see Theorem 7.232 of \cite{KV}), there is a bijection from $\Pi_{\rm FS}(\Lambda)$ to $\Pi_{\rm FS}(\Lambda^\prime)$ preserving the KGB elements.
\end{itemize}
These observations and the structure of $\Phi_1$ hint us to define $51$ disjoint sets $\Phi(bceg)$, $\Phi(cdfgh)$, $\dots, \Phi(abcdefg)$. For instance, $\Phi(bceg)$ collects those $\Lambda$ in $\bigcup_{N\geq 4}\Phi_N$ such that $b, c, e, g$ are non-zero, that all other coordinates are zero, and that
$\|\frac{\Lambda-\theta_{\texttt{x}} (\Lambda)}{2}\|^2 <\frac{425}{2}$, where \texttt{x}
  is  a KGB element for certain representation in $\Pi_{\rm FS}([0,1,1,0,1,0,1,0])$.

It turns out that  $\#\Phi(bceg)=3459$, $\#\Phi(cdfgh)=19885$, $\#\Phi(abcdefg)=376150$ and so on. The (disjoint) union of these $51$ sets should be much smaller than $\cup_{N\geq 4}\Phi_N$.  To verify Lemma \ref{lemma-FPP}, now it remains to check that any representation in $\Pi_{\rm FS}(\Lambda)$ is non-unitary, where $\Lambda$ runs over $\Phi(bceg)\cup \Phi(cdfgh)\cdots\cup\Phi(abcdefg)$. Then we move on as in the case of $\Phi_2\cup \Phi_3$.

After the verification of Lemma \ref{lemma-FPP}, we test unitarity for the representations in $\Pi_{\rm FS}(\Lambda)$ with $\Lambda$ running over $\Phi_1$ (Section \ref{sec-HJ} is still helpful here). Then we look at the $K$-types of the unitary representations up to the height required by Theorem \ref{thm-HP}. The final results are presented in the Appendix. Some concrete examples will be given in Section \ref{sec-examples}.

\subsection{Counting the strings in $\widehat{E_{8(-24)}}^d$}\label{sec-EIX-esc}

We use the method of \cite{D20} to count the number of strings in $\widehat{E_{8(-24)}}^d$. We compute that
\begin{align*}
&N([0,1,2,3,4,5,6])=177, \quad N([0,1,2,3,4,5,7])=31, \quad N([0,1,2,3,4,6,7])=21, \\
&N([0,1,2,3,5,6,7])=12, \quad N([0,1,2,4,5,6,7])=1, \quad N([0,1,3,4,5,6,7])=17,\\
&N([0,2,3,4,5,6,7])=115, \quad N([1,2,3,4,5,6,7])=162.
\end{align*}
In particular, it follows that $N_7=536$. We also compute that
$$
N_0=120, \quad N_1=224, \quad N_2=322, \quad N_3=469, \quad N_4=628, \quad N_5=736, \quad N_6=731.
$$
Therefore, the total number of strings for $E_{8(-24)}$ is equal to
$$
\sum_{i=0}^{7} N_i=3766.
$$

We have built some files to facilitate the classification of the Dirac series of $E_{8(-24)}$. These files are about how to further sharpen the Helgason-Johnson bound, how to determine the spin LKTs of the FS-scattered representations, and how to count the number of strings.
They are available via the following link:
\begin{verbatim}
https://www.researchgate.net/publication/374506412_EIX-files
\end{verbatim}

\section{Examples}\label{sec-examples}

This section aims to look at some FS-scattered Dirac series carefully. In particular, we will compare Dirac cohomology and Dirac index. Note that \emph{Dirac index} is the formal difference of the even part and the odd part of Dirac cohomology. Namely,
\begin{equation}\label{Dirac-index}
{\rm DI}(\pi):=H_D^+(\pi)-H_D^-(\pi),
\end{equation}
which is a virtual $\widetilde{K}$-module. Dirac index has better properties than Dirac cohomology. For instance, it preserves short exact sequences. An algorithm for efficiently computing the Dirac index was given in \cite{ALTV} and implemented in \texttt{atlas} by Adams.

But Dirac index may carry less information than Dirac cohomology. For instance, an irreducible unitary representation $\pi$ for the linear split $F_4$ has been reported in Example 6.3 of \cite{DDY}: it has non-zero Dirac cohomology, but ${\rm DI}(\pi)$ vanishes. Indeed, both $H_D^+(\pi)$ and $H_D^-(\pi)$ consist of the same $\widetilde{K}$-type. Thus they cancel with each other when passing to Dirac index.  This kind of \emph{cancellation phenomenon} continues to happen for other higher rank Lie groups. All the Dirac series that we have known so far suggest that, in general, whenever $H_D^+(\pi)$ and $H_D^-(\pi)$ have overlap, then one would have ${\rm DI}(\pi)=0$. The underlying reason is that there should be certain dichotomy among the spin LKTs of the Dirac series $\pi$. This is the content of Conjecture 1.4 of \cite{D21}.

Now for the group \texttt{E8\_q}, we will meet more such examples.

\begin{example}\label{exam-minimal-repn}
Consider the third representation $\pi$ in Example \ref{exam-HJ}, namely, the minimal representation of $E_{8(-24)}$. It has infinitesimal character $[1, 1, 1, 0, 1, 1, 1, 1]$ and multiplicity-free $K$-types $\mu+n\beta$ for $n\geq 0$. Here $\mu=[0, 0, 0, 0, 0, 0, 0, 8]$. By Theorem \ref{thm-HP}, to compute $H_D(\pi)$, it suffices to look at the $K$-types of $\pi$ up to the height of
\begin{verbatim}
2*[1, 1, 1, 0, 1, 1, 1, 1]*rho_check(G)
Value: 970/1
\end{verbatim}
By computing the spin norm, we find that $\pi$ has ten spin LKTs: $\mu+n\beta$ for $1\leq n\leq 10$. All of them have spin norm $2\sqrt{95}$, which equals $\|\Lambda\|$. Thus $H_D(\pi)$ is non-zero. But ${\rm DI} (\pi)=0$. Indeed, both $\mu+\beta$ and $\mu+10\beta$ contribute the $\widetilde{K}$-type $[0, 0, 0, 0, 0, 0, 0, 18]$ to $H_D(\pi)$, but with opposite parities. As suggested by Conjecture 1.4 of \cite{D21}, a dichotomy among the spin LKTs are exhibited as follows:
$$
\mu+\beta\leftrightarrow \mu+10\beta, \mu+2\beta\leftrightarrow \mu+9\beta, \mu+3\beta\leftrightarrow \mu+8\beta, \mu+4\beta\leftrightarrow \mu+7\beta, \mu+5\beta\leftrightarrow \mu+6\beta.
$$
Here when two $K$-types are linked with an arrow, we mean that they contribute the same $\widetilde{K}$-type(s) to $H_D(\pi)$, but with opposite parities on each one. \hfill\qed
\end{example}

\begin{example}\label{exam-Table19}
Let us look at the fifth representation of Table \ref{table-EIX-11101011-part1}. It has infinitesimal character $[1, 1, 1, 0, 1, 0, 1, 1]$. This time, it suffices to look at its $K$-types up to the height of
\begin{verbatim}
2*[1, 1, 1, 0, 1, 0, 1, 1]*rho_check(G)
Value: 802/1
\end{verbatim}
Then by computing the spin norm, we find that there are eight spin LKTs in total. Among them, both $[5,1,0,0,0,0,0,5]$ and $[0,6,0,0,0,0,0,10]$ contribute the $\widetilde{K}$-type $[0, 0, 0, 0, 0, 0, 5, 5]$ to the Dirac cohomology, but
 with opposite signs. We write this as:
$$
[5,1,0,0,0,0,0,5]\leftrightarrow [0,6,0,0,0,0,0,10]: \, [0, 0, 0, 0, 0, 0, 5, 5].
$$
The story goes on as follows:
\begin{align*}
&[3,3,0,0,0,0,2,5] \leftrightarrow [2,2,2,0,0,0,0,10]: \, [0, 0, 0, 0, 0, 2, -1, 1];\\
&[4,1,1,0,0,0,0,7] \leftrightarrow [1,5,0,0,0,0,1,8]: \, [0, 0, 0, 0, 0, 1, 3, 4], [0, 0, 0, 0, 0, 0, 2, 4];\\
&[3,1,2,0,0,0,0,9] \leftrightarrow [2,4,0,0,0,0,2,6]: \, [\frac{1}{2},\frac{1}{2},\frac{1}{2},\frac{1}{2},\frac{1}{2},\frac{1}{2},\frac{1}{2},\frac{5}{2}],
[0, 0, 0, 0, 1, 1, 0, 2].
\end{align*}
Again, this representation has zero Dirac index. Note that the Dirac index can be computed directly in \texttt{atlas} using the algorithm of \cite{MPVZ}. For instance, here we have
\begin{verbatim}
G:E8_q
set p=parameter(KGB(G)[54724],[1,2,2,-1,2,-5,8,1]/1,[3,3,3,-3,3,-16,20,0]/2)
show_dirac_index(p)
Dirac index is 0
\end{verbatim}\hfill\qed
\end{example}
\begin{rmk}\label{rmk-coho-DI}
It is quite efficient to compute Dirac index. Indeed, it took a few seconds in Example \ref{exam-Table19}. On the other hand, branching the $K$-types of the representation up to the required height is much harder. For instance, the first representation of Table \ref{table-EIX-10010111-part1} cost 75 days, and this was not the hardest one.
\end{rmk}

It is interesting to note that the cancellation phenomenon is most likely to happen for the infinitesimal character $\rho_c$, which is conjugate to $[1, 0, 0, 1, 0, 1, 1, 1]$ under the action of $W(\frg, \frt_f)$. See Tables \ref{table-EIX-10010111-part1} and \ref{table-EIX-10010111-part2}. Indeed, the set $\Pi_{\rm FS}^{\rm u}(\rho_c)$ has $29$ FS-scattered representations in total, and cancellation happens for $12$ of them.

\begin{example}\label{exam-rhoc}
Let us look at the fifth representation $\pi$ of Table \ref{table-EIX-10010111-part1}. It has four spin LKTs, both $H_D^+(\pi)$ and $H_D^-(\pi)$ consist of two copies of the trivial $\widetilde{K}$-type. The dichotomy among the spin LKTs is demonstrated as follows:
$$
[1,0,3,0,0,0,1,11]\leftrightarrow [0,0,4,0,0,0,0,10], \quad [1,3,0,0,0,0,4,5]\leftrightarrow [0,3,1,0,0,0,3,4].
$$\hfill\qed
\end{example}

Now let us show that many FS-scattered members can be realized ``string limits".

\begin{example}\label{exam-string-limits}
Let us start with a $\theta$-stable parabolic subalgebra of $G$.
\begin{verbatim}
G:E8_q
set kgp=KGP(G, [0,1,2,3,4,5,6])
set P=kgp[3]
set L=Levi(P)
L
Value: connected real group with Lie algebra 'e7(so(12).su(2)).u(1)'
\end{verbatim}
Now let us take the trivial module \texttt{t} of $L$, and minus the \texttt{lambda} parameter of \texttt{t} by $[0,0,0,0,0,0,0,i]$. By doing cohomological induction via the following command
\begin{verbatim}
theta_induce_irreducible(parameter(x(t),lambda(t)-[0,0,0,0,0,0,0,i],nu(t)),G)
\end{verbatim}
we  get the module $A_{\frq}([0,0,0,0,0,0,0, -i])$, which is good for $i=0$, weakly good for $i=1$, and fair for $2\leq i\leq 13$. We present their \texttt{atlas} parameters as follows:
\begin{table}[H]
\centering
\begin{tabular}{lcr}
$i$ & \texttt{atlas} parameters & height  \\
\hline
$0$ & $(53504, [1,1,1,1,1,1,1,1], [1,0,1,2,0,2,0,-13])$ & $868$ \\
$1$ & $(53504, [1,1,1,1,1,1,1,0], [1,0,1,2,0,2,0,-13])$ & $810$ \\
$2$ & $(55211, [1,1,1,1,1,2,-2,3], [1,0,1,2,0,2,-13,13])$ & $752$ \\
$3$ & $(56790, [1,1,1,1,1,-1,3,1], [1,0,1,2,0,-11,13,0])$ & $694$ \\
$4$ & $(58238, [1,1,1,2,-4,5,2,1], [1,0,1,2,-11,11,2,0])$ & $636$ \\
$5$ & $(59541, [1,1,1,-2,4,1,1,1], [1,0,1,-9,11,0,2,0])$ & $578$ \\
$6$ & $(60705, [1,-5,-3,6,1,1,2,1], [1,-9,-8,9,2,0,2,0])$ & $520$ \\
$7$ & $(62674, [-3,8,6,-5,1,1,2,1], [-7,9,8,-8,2,0,2,0])$ & $462$ \\
$8$ & $(64228, [6,1,-5,8,-5,1,2,1], [7,1,-7,8,-6,0,2,0])$ & $404$ \\
$9$ & $(65296, [1,1,6,-3,6,-5,2,1], [0,1,7,-5,6,-6,2,0])$  & $346$ \\
$10$ & $(66049, [1,-2,1,5,-4,7,-4,1], [0,-4,2,5,-5,6,-4,0])$ & $288$ \\
$11$ & $(66733, [1,4,1,-3,6,-2,5,-4], [0,4,2,-4,5,-3,4,-4])$ & $230$ \\
$12$ & $(67007, [1,1,-2,5,-2,3,-2,5], [0,0,-2,4,-2,3,-3,4])$ & $172$ \\
$13$ & $(66950, [-3,1,4,0,3,-2,1,5], [-3,1,4,-1,2,-2,0,4])$ & $172$
\end{tabular}
\label{}
\end{table}
Here the last column records the height of the LKT of the representation.
We note that for $2\leq i\leq 13$, the modules are all FS-scattered Dirac series. In other words, they can be viewed as \emph{limits} of the following string:
\begin{verbatim}
(53504, [1,1,1,1,1,1,1,a], [1,0,1,2,0,2,0,-13])
\end{verbatim}
where $a$ runs over the non-negative integers. It is interesting to note that, up to signs, the Dirac indices of the last four $A_{\frq}(\lambda)$ modules are three copies of the following $\widetilde{K}$-modules
$$
[0,0,0,0,0,0,0,8], \, [0,0,0,0,0,0,0,6], \, [0,0,0,0,0,0,0,4], \, [0,0,0,0,0,0,0,2],
$$
respectively. The module $A_{\frq}([0,0,0,0,0,0,0, -i])$ has GK dimension $57$ for $0\leq i\leq 14$.
\hfill\qed
\end{example}

\section{Appendix}\label{sec-appendix}

This appendix presents all the $211$ FS-scattered Dirac series representations of $E_{8(-24)}$ according to their infinitesimal characters.  Those special unipotent representations in the sense of \cite{BV} are marked with $\clubsuit$. They are available from the list \cite{LSU}.

Whenever the cancellation phenomenon (see Section \ref{sec-examples}) happens, we will mark the representation $\pi$ with $*$.  There are fifteen such representations in total. We have checked that ${\rm DI}(\pi)=0$ for every such $\pi$. This gives further evidence to Conjecture 1.4 of \cite{D21}.

We will show a spin LKT in the \textbf{bold} fashion if it is a LKT at the same time.

\begin{table}[H]
\centering
\caption{Infinitesimal character $[0,1,1,0,1,0,1,1]$}
\begin{tabular}{lcc}
$\# x$ & $\lambda/\nu$ &  Spin LKTs   \\
\hline
$66950$ & $[-3,1,4,0,3,-2,1,5]$  & $[6,0,0,0,0,0,0,14]$, $[2,0,0,0,0,4,0,6]$,\\
&  $[-3,1,4,-1,2,-2,0,4]$& $[2,0,0,0,0,0,8,2]$ \\

$66125$ & $[-5,1,6,-1,4,-2,1,2]$  & $[0,0,0,0,0,1,0,24]$, $[4,0,0,0,0,1,0,20]$,\\
&  $[-\frac{11}{2},0,\frac{11}{2},-\frac{3}{2},\frac{7}{2},-2,0,\frac{3}{2}]$& $[0,0,0,0,0,0,10,2]$, $[0,0,0,0,0,5,0,12]$  \\

$45485$ & $[-3,4,5,-2,1,-5,7,2]$  & $[0,0,0,3,0,0,0,4]$, $[0,6,0,0,0,0,0,2]$\\
&  $[-3,4,4,-3,1,-6,7,1]$&  \\

$38214$ & $[-2,3,5,-2,1,-4,5,3]$  & $[0,0,0,0,0,1,0,28]$\\
&  $[-3,\frac{5}{2},\frac{11}{2},-\frac{5}{2},0,-\frac{11}{2},\frac{11}{2},\frac{5}{2}]$&  \\

$38212$ & $[-2,3,5,-2,1,-4,5,3]$  & $[5,0,0,0,0,0,1,19]$, $[1,0,0,0,0,4,1,11]$\\
&  $[-3,\frac{5}{2},\frac{11}{2},-\frac{5}{2},0,-\frac{11}{2},\frac{11}{2},\frac{5}{2}]$&  \\

$38210$ & $[-2,3,5,-2,1,-4,5,3]$  & $[5,1,0,0,0,0,0,13]$, $[2,1,0,0,0,3,0,11]$\\
&  $[-3,\frac{5}{2},\frac{11}{2},-\frac{5}{2},0,-\frac{11}{2},\frac{11}{2},\frac{5}{2}]$&  \\

$32933$ & $[-4,1,5,-4,5,0,1,1]$  & $\textbf{[0,0,0,0,0,0,0,30]}$\\
&  $[-6,0,6,-6,6,-1,1,1]$&  \\
$24579$ & $[-3,2,5,-4,6,-4,2,2]$  & $[1,0,0,0,2,1,0,12]$, $[1,0,1,0,1,1,1,14]$,\\
&  $[-4,\frac{3}{2},5,-5,6,-\frac{9}{2},1,1]$& $[2,0,0,1,0,2,0,12]$, $[0,0,0,3,0,0,0,8]$  \\
$18458$ & $[0,1,2,-1,2,-1,1,2]$  & $[0,1,0,0,1,0,6,2]$, $[1,0,0,1,0,0,6,0]$,\\
&  $[-\frac{5}{2},0,\frac{7}{2},-\frac{7}{2},6,-6,\frac{5}{2},1]$&  $[0,0,2,0,0,0,6,2]$\\
$18457$ & $[0,1,2,-1,2,-1,1,2]$  & $\textbf{[0,0,2,0,0,0,0,24]}$\\
&  $[-\frac{5}{2},0,\frac{7}{2},-\frac{7}{2},6,-6,\frac{5}{2},1]$&  \\

$6718$ & $[-1,2,6,-4,2,-1,1,3]$  & $[0,3,1,0,0,0,3,6]$, $[0,2,0,1,0,1,2,4]$\\
&  $[-3, \frac{3}{2},4,-3,1,-1,0,\frac{5}{2}]$&  \\
$5412$ & $[0,2,3,-2,1,0,1,2]$  & $[1,1,0,0,0,3,1,12]$, $[0,0,1,0,0,3,2,10]$\\
&  $[-2,\frac{5}{2},3,-3,0,-\frac{1}{2},1,2]$& 	
\end{tabular}
\label{table-EIX-01101011}
\end{table}

\begin{table}[H]
\centering
\caption{Infinitesimal character $[0,1,1,0,1,1,0,1]$}
\begin{tabular}{lcc}
$\# x$ & $\lambda/\nu$ &  Spin LKTs   \\
\hline
$43263$ & $[-3,5,6,-3,1,2,-7,10]$  & $[0,1,1,2,0,0,0,3]$, $[0,5,1,0,0,0,0,3]$\\
&  $[-3,4,4,-3,1,1,-7,8]$&  \\
$35758$ & $[-2,1,3,0,1,1,-2,3]$  & $[0,0,0,0,0,1,1,29]$\\
&  $[-3,\frac{5}{2},\frac{11}{2},-\frac{5}{2},0,0,-\frac{11}{2},8]$&  \\
$35756$ & $[-2,1,3,0,1,1,-2,3]$  & $[5,0,0,0,0,0,2,20]$, $[1,0,0,0,0,4,2,12]$\\
&  $[-3,\frac{5}{2},\frac{11}{2},-\frac{5}{2},0,0,-\frac{11}{2},8]$&  \\
$35754$ & $[-2,1,3,0,1,1,-2,3]$  & $[5,1,0,0,0,0,1,12]$, $[3,1,0,0,0,2,1,12]$\\
&  $[-3,\frac{5}{2},\frac{11}{2},-\frac{5}{2},0,0,-\frac{11}{2},8]$&  \\

$8173$ & $[-1,2,6,-4,1,2,-1,3]$  & $[0,3,1,0,0,1,2,7]$, $[0,2,0,1,0,2,1,5]$\\
&  $[-\frac{7}{2},1,\frac{9}{2},-3,0,\frac{3}{2},-\frac{3}{2},\frac{5}{2}]$&  \\
$4749$ & $[0,1,1,0,1,1,0,1]$  & $[1,1,0,0,0,3,2,13]$, $[0,0,1,0,0,3,3,11]$\\
&  $[-2,\frac{5}{2},3,-3,0,0,0,\frac{5}{2}]$& 	
\end{tabular}
\label{table-EIX-01101101}
\end{table}

\begin{table}[H]
\centering
\caption{Infinitesimal character $[0, 1, 1, 0, 1, 1, 1, 1]$}
\begin{tabular}{lcc}
$\# x$ & $\lambda/\nu$ &  Spin LKTs   \\
\hline
$62674$ & $[-3,8,6,-5,1,1,2,1]$  & $[6,0,0,0,0,0,0,2]$, $[5,0,0,0,0,0,4,4]$,\\
&  $[-7,9,8,-8,2,0,2,0]$& $[5,0,0,0,0,1,3,3]$, $[5,0,0,0,0,0,5,5]$, \\
& & $[5,0,0,0,0,1,4,4]$\\
$57322$ & $[-1,5,5,-4,1,1,2,1]$  & $[0,0,0,0,0,1,3,21]$, $[1,0,0,0,0,0,4,22]$,\\
&  $[-\frac{9}{2},\frac{17}{2},\frac{17}{2},-\frac{17}{2},0,0,4,1]$& $[0,0,0,0,0,2,2,22]$, $[1,0,0,0,0,1,3,23]$, \\
& & $[0,0,0,0,0,0,10,14]$, $[0,0,0,0,0,5,0,24]$\\
$9792$ & $[-1,1,4,-2,1,1,2,1]$  & $[0,2,0,0,0,5,0,10]$, $[1,3,0,0,0,3,1,10]$,\\
&  $[-\frac{9}{2},1,\frac{11}{2},-\frac{7}{2},0,0,2,1]$&  $[0,2,1,0,0,4,0,8]$\\
$4752$ & $[0,2,3,-2,1,1,1,2]$  & $[4,1,0,0,0,0,1,30]$, $[4,0,0,0,0,1,0,32]$\\
&  $[-\frac{5}{2},3,\frac{7}{2},-\frac{7}{2},0,0,0,3]$& 	
\end{tabular}
\label{table-EIX-01101111}
\end{table}

\begin{table}[H]
\centering
\caption{Infinitesimal character $[1,0,0,1,0,1,1,1]$, part I}
\begin{tabular}{lcc}
$\# x$ & $\lambda/\nu$ &  Spin LKTs   \\
\hline
$66865_{\clubsuit}$ & $[6,1,-1,2,-1,1,1,3]$  & $[5,0,0,0,0,0,1,17]$, $[1,0,0,0,0,4,1,9]$,\\
&  $[4,0,0,1,-1,0,0,4]$&  $[1,0,0,0,0,0,9,1]$\\
$66865_{\clubsuit}$ & $[6,1,-1,2,-1,1,1,4]$  & $[6,0,0,0,0,0,0,16]$, $[2,0,0,0,0,4,0,8]$,\\
&  $[4,0,0,1,-1,0,0,4]$&  $[2,0,0,0,0,0,8,0]$\\
$66818_{\clubsuit}$ & $[5,0,0,3,-2,1,1,3]$  & $[0,0,0,0,0,0,1,27]$, $[5,0,0,0,0,0,1,17]$,\\
&  $[4,0,0,\frac{5}{2},-\frac{5}{2},0,0,\frac{5}{2}]$& $[1,0,0,0,0,4,1,9]$, $[1,0,0,0,0,0,9,1]$ \\
$66492_{\clubsuit}$ & $[6,-1,-1,3,-1,1,1,2]$  & $[0,0,0,0,0,1,0,26]$, $[4,0,0,0,0,1,0,18]$,\\
&  $[6,-2,-2,3,-1,0,0,2]$& $[0,0,0,0,0,5,0,10]$, $[0,0,0,0,0,0,10,0]$\\
$58826^*$ & $[2,-6,-6,9,-2,3,2,1]$  & $[1,0,3,0,0,0,1,11]$, $[0,0,4,0,0,0,0,10]$,\\
&  $[\frac{3}{2},-\frac{13}{2},-\frac{13}{2},8,-\frac{3}{2},\frac{3}{2},2,0]$ & $[1,3,0,0,0,0,4,5]$, $[0,3,1,0,0,0,3,4]$	\\
$49177_{\clubsuit}$ & $[1,-1,-1,8,-6,1,1,2]$  & $[0,0,0,0,0,0,1,27]$\\
&  $[0,-2,-2,9,-7,0,0,2]$ &	\\
$47337^*$ & $[4,-5,-5,8,-2,1,1,3]$  & $[0,0,0,0,0,0,1,27]$, $[0,0,0,0,0,1,0,26]$\\
&  $[4,-\frac{13}{2},-\frac{13}{2},9,-\frac{5}{2},0,0, \frac{5}{2}]$ &	\\
$47335^*$ & $[4,-5,-5,8,-2,1,1,3]$  & $[0,0,0,0,0,5,0,10]$, $[1,0,0,0,0,4,1,9]$\\
&  $[4,-\frac{13}{2},-\frac{13}{2},9,-\frac{5}{2},0,0, \frac{5}{2}]$ &	\\
$43271$ & $[2,-3,-5,6,-5,8,1,2]$  & $[0,0,0,3,0,0,0,6]$, $[0,6,0,0,0,0,0,0]$\\
&  $[1,-4,-4,5,-5,6,1,1]$ &	\\
$40632_{\clubsuit}$ & $[5,-4,-4,5,1,0,1,0]$  & $\textbf{[0,0,0,0,0,0,0,28]}$\\
&  $[\frac{13}{2}, -\frac{13}{2}, -\frac{13}{2}, \frac{13}{2}, -\frac{3}{2}, \frac{3}{2}, 0, \frac{3}{2}]$&	
\end{tabular}
\label{table-EIX-10010111-part1}
\end{table}

\begin{table}[H]
\centering
\caption{Infinitesimal character $[1,0,0,1,0,1,1,1]$, part II}
\begin{tabular}{lcc}
$\# x$ & $\lambda/\nu$ &  Spin LKTs   \\
\hline
$37112$ & $[3,-3,-5,6,-3,4,1,3]$  & $[6,0,0,0,0,0,0,16]$, $[4,1,0,0,0,0,1,16]$,\\
&  $[\frac{5}{2},-3,-6,6,-5,5,0,\frac{5}{2}]$& $[1,1,0,0,0,3,1,10]$, $[2,0,0,0,0,4,0,8]$ \\

$35831$ & $[1,0,-3,4,-3,4,1,1]$  & $\textbf{[0,0,0,0,0,1,0,26]}$\\
&  $[\frac{5}{2},-\frac{5}{2},-\frac{11}{2},\frac{11}{2},-\frac{11}{2},\frac{11}{2},0,\frac{5}{2}]$& \\

$35827$ & $[1,0,-3,4,-3,4,1,1]$  & $[5,0,0,0,0,0,1,17]$, $[1,0,0,0,0,4,1,9]$\\
&  $[\frac{5}{2},-\frac{5}{2},-\frac{11}{2},\frac{11}{2},-\frac{11}{2},\frac{11}{2},0,\frac{5}{2}]$& \\

$35826$ & $[1,0,-3,4,-3,4,1,1]$  & $[5,1,0,0,0,0,0,15]$, $[2,1,0,0,0,3,0,9]$\\
&  $[\frac{5}{2},-\frac{5}{2},-\frac{11}{2},\frac{11}{2},-\frac{11}{2},\frac{11}{2},0,\frac{5}{2}]$& \\

$32165^*$ & $[1,-6,0,7,-3,1,1,1]$  & $[0,0,4,0,0,0,0,10]$, $[0,0,3,0,1,0,0,9]$\\
&  $[1,-7,-\frac{1}{2},7,-\frac{9}{2},0,2,0]$& \\

$31389^*$ & $[1,-5,-1,8,-4,1,1,1]$  & $[0,3,0,0,0,0,0,21]$, $[0,2,1,0,0,0,0,20]$\\
&  $[0,-\frac{13}{2},-\frac{1}{2},\frac{15}{2},-5,0,2,0]$& \\

$28777^*$ & $[2,-5,-1,7,-6,4,1,3]$  & $[1,1,0,0,1,1,3,5]$, $[1,2,0,0,0,1,4,4]$\\
&  $[1,-\frac{11}{2},-1,\frac{13}{2},-\frac{13}{2},3,1,1]$& \\

$27218^*$ & $[3,-4,-2,7,-6,3,2,3]$  & $[0,1,0,0,1,1,4,6]$, $[0,2,0,0,0,1,5,5]$,\\
&  $[2,-\frac{9}{2},-2,\frac{13}{2},-\frac{13}{2},2,2,1]$& $[1,0,0,0,2,0,4,4]$, $[1,1,0,0,1,0,5,3]$ \\

$19360^*$ & $[1,-5,-1,6,-4,2,3,3]$  & $[0,0,0,0,2,0,5,5]$, $[0,1,0,0,1,0,6,4]$\\
&  $[0,-\frac{11}{2},-1,\frac{11}{2},-\frac{9}{2},1,3,1]$&  \\

$17768^*$ & $[1,-1,-3,6,-4,2,2,1]$  & $[0,0,0,0,3,0,0,15]$, $[1,0,0,0,2,1,0,14]$\\
&  $[1,-1,-4,6,-5,1,1,1]$&  \\

$15697$ & $[6,-3,-4,5,-3,2,2,2]$  & $\textbf{[0,0,0,2,0,0,0,18]}$\\
&  $[5,-3,-4,4,-3,1,1,1]$&  \\

$14591^*$ & $[1,0,-2,5,-4,1,3,1]$  & $[2,0,0,0,0,4,0,8]$, $[1,0,1,0,0,3,1,7]$\\
&  $[1,0,-3,5,-5,0,2,1]$&  \\

$13658$ & $[3,-2,-1,4,-3,1,2,2]$  & $[0,0,0,0,0,5,0,10]$, $[1,1,0,0,0,3,1,10]$,\\
&  $[\frac{9}{2}, -\frac{5}{2}, -\frac{5}{2}, \frac{7}{2}, -\frac{7}{2}, 0, 2, 1]$& $[1,1,1,0,0,2,1,8]$, $[0,0,2,0,0,3,0,6]$ \\

$12664$ & $[2,0,-1,2,-1,1,1,2]$  & $[2,0,0,0,0,0,8,0]$, $[0,0,1,0,0,0,8,2]$\\
&  $[5,-2,-3,3,-3,0,2,1]$&  \\

$12662$ & $[2,0,-1,2,-1,1,1,2]$  & $[1,0,0,0,0,4,1,9]$, $[1,0,1,0,0,3,1,7]$\\
&  $[5,-2,-3,3,-3,0,2,1]$&  \\

$12660$ & $[2,0,-1,2,-1,1,1,2]$  & $[0,1,0,0,0,4,0,11]$, $[0,1,2,0,0,2,0,7]$\\
&  $[5,-2,-3,3,-3,0,2,1]$&  \\

$10759^*$ & $[3,-1,-3,4,-2,1,2,2]$  & $[1,0,0,0,0,0,9,1]$, $[0,0,1,0,0,0,8,2]$\\
&  $[3, -\frac{3}{2}, -\frac{9}{2}, \frac{9}{2},  -3, 0, \frac{3}{2}, \frac{3}{2}]$&  \\

$7482^*$ & $[2,-1,-1,4,-3,1,1,3]$  & $[0,3,1,0,0,0,3,4]$, $[0,3,0,1,0,0,2,3]$\\
&  $[\frac{5}{2}, -\frac{1}{2}, -\frac{5}{2}, \frac{7}{2}, -\frac{7}{2}, 0, 0, 3]$&  \\

$4287$ & $[2,-1,-2,4,-2,1,1,2]$  & $[0,3,0,0,0,0,5,6]$, $[0,1,0,0,1,1,4,6]$,\\
&  $[1, -\frac{5}{2}, -\frac{5}{2}, \frac{7}{2}, -\frac{3}{2}, 0, 0, 2 ]$& $[1,2,0,0,1,0,3,6]$, $[1,2,0,0,0,1,4,4]$	
\end{tabular}
\label{table-EIX-10010111-part2}
\end{table}

\begin{table}[H]
\centering
\caption{Infinitesimal character $[1, 0, 0, 1, 1, 0, 1, 1]$}
\begin{tabular}{lcc}
$\# x$ & $\lambda/\nu$ &  Spin LKTs   \\
\hline
$40971$ & $[1,-2,-3,5,1,-4,6,1]$  & $[0,1,1,2,0,0,0,5]$, $[0,5,1,0,0,0,0,1]$\\
&  $[1,-4,-4,5,1,-6,7,1]$&  \\
$33341$ & $[2,-1,-3,4,1,-3,4,2]$  & $\textbf{[0,0,0,0,0,1,1,27]}$\\
&  $[\frac{5}{2},-\frac{5}{2},-\frac{11}{2},\frac{11}{2},0,-\frac{11}{2},\frac{11}{2},\frac{5}{2}]$&  \\
$33337$ & $[2,-1,-3,4,1,-3,4,2]$  & $[5,0,0,0,0,0,2,18]$, $[1,0,0,0,0,4,2,10]$\\
&  $[\frac{5}{2},-\frac{5}{2},-\frac{11}{2},\frac{11}{2},0,-\frac{11}{2},\frac{11}{2},\frac{5}{2}]$&  \\
$33336$ & $[2,-1,-3,4,1,-3,4,2]$  & $[5,1,0,0,0,0,1,14]$, $[3,1,0,0,0,2,1,10]$\\
&  $[\frac{5}{2},-\frac{5}{2},-\frac{11}{2},\frac{11}{2},0,-\frac{11}{2},\frac{11}{2},\frac{5}{2}]$& 	
\end{tabular}
\label{table-EIX-10011011}
\end{table}

\begin{table}[H]
\centering
\caption{Infinitesimal character $[1, 0, 0, 1, 1, 1, 0, 1]$}
\begin{tabular}{lcc}
$\# x$ & $\lambda/\nu$ &  Spin LKTs   \\
\hline
$38606$ & $[2,-3,-4,6,1,2,-6,8]$  & $[0,2,2,1,0,0,0,4]$, $[0,4,2,0,0,0,0,2]$\\
&  $[1,-4,-4,5,1,1,-7,8]$&  \\
$30869$ & $[1,0,-2,3,1,1,-2,3]$  & $\textbf{[0,0,0,0,0,1,2,28]}$\\
&  $[\frac{5}{2},-\frac{5}{2},-\frac{11}{2},\frac{11}{2},0,0,-\frac{11}{2},8]$&  \\
$30865$ & $[1,0,-2,3,1,1,-2,3]$  & $[5,0,0,0,0,0,3,19]$, $[1,0,0,0,0,4,3,11]$\\
&  $[\frac{5}{2},-\frac{5}{2},-\frac{11}{2},\frac{11}{2},0,0,-\frac{11}{2},8]$&  \\
$30864$ & $[1,0,-2,3,1,1,-2,3]$  & $[5,1,0,0,0,0,2,13]$, $[4,1,0,0,0,1,2,11]$\\
&  $[\frac{5}{2},-\frac{5}{2},-\frac{11}{2},\frac{11}{2},0,0,-\frac{11}{2},8]$& 	
\end{tabular}
\label{table-EIX-10011101}
\end{table}

\begin{table}[H]
\centering
\caption{Infinitesimal character $[1, 0, 0, 1, 1, 1, 1, 1]$}
\begin{tabular}{lcc}
$\# x$ & $\lambda/\nu$ &  Spin LKTs   \\
\hline
$60705$ & $[1,-5,-3,6,1,1,2,1]$  & $\textbf{[6,0,0,0,0,0,0,0]}$, $[6,0,0,0,0,0,4,4]$\\
&  $[1,-9,-8,9,2,0,2,0]$&  \\
$54212$ & $[3,-3,-3,4,1,1,2,1]$  & $[0,0,0,0,0,1,4,22]$, $[0,0,0,0,0,0,10,16]$, \\
&  $[4,-\frac{17}{2},-\frac{17}{2},\frac{17}{2},0,0,4,1]$&  $[0,0,0,0,0,5,0,26]$	
\end{tabular}
\label{table-EIX-10011111}
\end{table}

\begin{table}[H]
\centering
\caption{Infinitesimal character $[1,0,1,1,0,1,0,1]$}
\begin{tabular}{lcc}
$\# x$ & $\lambda/\nu$ &  Spin LKTs   \\
\hline
$66049$ & $[1,-2,1,5,-4,7,-4,1]$  & $[6,0,0,0,0,0,0,8]$, $[2,0,0,0,0,4,0,0]$, \\
&  $[0,-4,2,5,-5,6,-4,0]$& $[2,0,0,0,0,0,8,8]$ \\

$63572$ & $[1,-3,1,4,-3,7,-3,1]$  & $[0,0,0,0,0,1,0,18]$, $[0,0,0,0,0,0,10,8]$ \\
&  $[0,-\frac{9}{2},0,\frac{9}{2},-\frac{9}{2},\frac{17}{2},-\frac{9}{2},1]$& $[0,0,0,0,0,5,0,18]$, $[4,0,0,0,0,1,0,26]$  \\

$26300$ & $[1,-4,1,5,-2,1,-2,5]$  & $\textbf{[2,0,0,0,0,4,0,0]}$\\
&  $[0,-5,2,5,-4,1,-4,5]$&  \\

$24970$ & $[1,-2,2,3,-2,2,-2,3]$  & $[0,6,0,0,0,0,0,8]$, $[0,0,0,3,0,0,0,14]$ \\
&  $[1,-\frac{9}{2},2,\frac{9}{2},-\frac{9}{2},2,-\frac{9}{2},\frac{9}{2}]$&  \\

$24315$ & $[1,-2,2,5,-4,2,-3,4]$  & $[0,1,0,0,0,4,0,19]$, $[3,1,0,0,0,1,0,25]$ \\
&  $[0,-4,2,5,-5,2,-\frac{9}{2},\frac{9}{2}]$&  \\	

$13993$ & $[1,-1,1,2,-1,3,-2,1]$  & $[0,0,0,0,0,0,1,35]$ \\
&  $[0,-\frac{9}{2},0,\frac{9}{2},-\frac{9}{2},4,-4,5]$&  \\

$13990$ & $[1,-1,1,2,-1,3,-2,1]$  & $[1,0,0,0,0,0,9,9]$\\
&  $[0,-\frac{9}{2},0,\frac{9}{2},-\frac{9}{2},4,-4,5]$&  \\

$12936$ & $[1,-2,1,3,-2,3,-2,3]$  & $\textbf{[0,0,0,0,0,0,0,36]}$ \\
&  $[0,-\frac{9}{2},0,\frac{9}{2},-\frac{9}{2},\frac{9}{2},-\frac{9}{2},\frac{9}{2}]$&  \\	
\end{tabular}
\label{table-EIX-10110101}
\end{table}

\begin{table}[H]
\centering
\caption{Infinitesimal character $[1,0,1,1,0,1,1,1]$}
\begin{tabular}{lcc}
$\# x$ & $\lambda/\nu$ &  Spin LKTs   \\
\hline

$37303$ & $[2,-3,1,5,-3,1,1,1]$  & $[0,4,1,0,0,0,0,18]$, $[0,5,0,0,0,0,3,14]$,\\
&  $[\frac{3}{2},-\frac{15}{2},0,9,-7,0,2,0]$&  $[0,2,3,0,0,0,0,20]$ \\

$33662$ & $[1,-5,1,6,-3,1,1,3]$  & $\textbf{[0,0,0,0,0,6,0,0]}$, $[0,0,0,0,2,4,0,2]$\\
&  $[0,-9,0,9,-6,1,1,1]$&  \\

$16077$ & $[1,-1,1,2,-1,1,3,-1]$  & $\textbf{[2,0,0,0,0,0,0,36]}$ \\
&  $[0,-\frac{11}{2},0,\frac{11}{2},-\frac{11}{2},0,5,1]$&  \\	
\end{tabular}
\label{table-EIX-10110111}
\end{table}

%
%
%
%

\begin{table}[H]
\centering
\caption{Infinitesimal character $[1,1,0,1,0,1,0,1]$, Part I}
\begin{tabular}{lcc}
$\# x$ & $\lambda/\nu$ &  Spin LKTs   \\
\hline
$67007$ & $[1,1,-2,5,-2,3,-2,5]$  & $[6,0,0,0,0,0,0,12]$, $[2,0,0,0,0,4,0,4]$, \\
&  $[0,0,-2,4,-2,3,-3,4]$& $[2,0,0,0,0,0,8,4]$ \\

$65759$ & $[1,1,-4,5,-1,4,-2,2]$  & $[0,0,0,0,0,1,0,22]$, $[0,0,0,0,0,0,10,4]$, \\
&  $[0,0,-5,5,-1,4,-3,1]$&  $[4,0,0,0,0,1,0,22]$, $[0,0,0,0,0,5,0,14]$\\

$47680$ & $[4,2,-1,3,-2,2,-6,8]$  & $[0,0,0,3,0,0,0,2]$, $[0,6,0,0,0,0,0,4]$ \\
&  $[3,1,-2,3,-2,1,-7,8]$&  \\

$46793$ & $[2,2,-4,5,-4,6,-3,1]$  & $[0,2,1,0,0,0,0,16]$, $[0,0,1,0,2,0,0,20]$, \\
&  $[\frac{3}{2},2,-\frac{11}{2},\frac{11}{2},-\frac{11}{2},7,-5,0]$& $[0,0,3,0,0,0,2,16]$, $[0,3,0,0,0,0,5,10]$ \\

$43240$ & $[1,2,-7,8,-5,6,-3,3]$  & $[2,0,0,0,0,4,0,4]$, $[0,0,0,2,0,2,0,0]$, \\
&  $[0,3,-6,6,-5,5,-4,1]$&  $[0,0,0,0,4,0,0,4]$\\

$40735$ & $[3,1,0,3,-2,1,-4,7]$  & $[5,1,0,0,0,0,0,11]$, $[2,1,0,0,0,3,0,13]$ \\
&  $[3,0,0,\frac{5}{2},-\frac{5}{2},0,-\frac{11}{2},8]$& \\

$35410$ & $[1,1,-1,4,-2,1,-3,6]$  & $[0,0,0,0,0,1,0,30]$ \\
&  $[0,0,-2,5,-3,0,-5,8]$&   \\

$35408$ & $[1,1,-1,4,-2,1,-3,6]$  & $[5,0,0,0,0,0,1,21]$, $[1,0,0,0,0,4,1,13]$ \\
&  $[0,0,-2,5,-3,0,-5,8]$&  \\

$25275$ & $[1,1,-3,4,-3,4,1,-1]$  & $\textbf{[0,0,0,0,0,0,0,32]}$ \\
&  $[0,0,-\frac{11}{2},\frac{11}{2},-\frac{11}{2},\frac{11}{2},-\frac{1}{2},1]$& \\

$17087$ & $[7,2,-5,1,-1,4,-3,8]$  & $[0,0,2,0,0,3,0,2]$ \\
&  $[5,1,-4,1,-2,3,-3,4]$&  \\

$15858$ & $[2,2,-4,6,-4,2,-2,4]$  & $[0,2,0,0,2,1,0,6]$, $[0,0,2,0,0,3,0,10]$ \\
&  $[1,1,-4,5,-4,1,-2,3]$&  \\

$15898$ & $[6,3,-3,1,-3,5,-2,3]$  & $[1,0,2,0,0,1,1,17]$, $[1,0,3,0,0,0,1,15]$ \\
&  $[5,\frac{3}{2},-\frac{7}{2},0,-2,4,-3,3]$&\\

$14814$ & $[1,0,-1,1,1,2,-1,2]$  & $[0,3,0,0,0,0,0,25]$ \\
&  $[5,\frac{3}{2},-\frac{7}{2},0,-\frac{3}{2},\frac{7}{2},-\frac{7}{2},\frac{7}{2}]$&  \\

$14812$ & $[1,0,-1,1,1,2,-1,2]$  & $[0,3,0,0,2,0,0,7]$ \\
&  $[5,\frac{3}{2},-\frac{7}{2},0,-\frac{3}{2},\frac{7}{2},-\frac{7}{2},\frac{7}{2}]$&  \\

$14811$ & $[1,0,-1,1,1,2,-1,2]$  & $[0,0,3,0,0,0,2,16]$ \\
&  $[5,\frac{3}{2},-\frac{7}{2},0,-\frac{3}{2},\frac{7}{2},-\frac{7}{2},\frac{7}{2}]$&  \\

$14810$ & $[1,0,-1,1,1,2,-1,2]$  & $[2,0,2,0,0,1,0,16]$ \\
&  $[5,\frac{3}{2},-\frac{7}{2},0,-\frac{3}{2},\frac{7}{2},-\frac{7}{2},\frac{7}{2}]$&  \\

$12610$ & $[2,1,-1,2,-1,1,0,2]$  & $[1,3,0,0,0,0,4,1]$ \\
&  $[1,0,-3,5,-5,2,-2,3]$&  \\

$12609$ & $[2,1,-1,2,-1,1,0,2]$  & $\textbf{[0,0,2,0,0,0,0,26]}$ \\
&  $[1,0,-3,5,-5,2,-2,3]$&  \\

$12607$ & $[2,1,-1,2,-1,1,0,2]$  & $[2,0,0,0,2,0,1,17]$, $[0,0,0,2,0,0,3,13]$ \\
&  $[1,0,-3,5,-5,2,-2,3]$&  \\

$11607$ & $[4,1,-3,1,-1,4,-1,4]$  & $[2,0,0,0,0,0,8,4]$, $[0,0,1,0,0,0,8,6]$ \\
&  $[\frac{7}{2},0,-\frac{7}{2},0,-\frac{1}{2},\frac{7}{2},-\frac{5}{2},\frac{7}{2}]$&  \\

$10670$ & $[2,1,-1,1,0,3,-2,1]$  & $[1,0,1,0,0,0,7,5]$\\
&  $[\frac{7}{2},0,-\frac{7}{2},0,0,3,-3,4]$&
\end{tabular}
\label{table-EIX-11010101-part1}
\end{table}

\begin{table}[H]
\centering
\caption{Infinitesimal character $[1,1,0,1,0,1,0,1]$, Part II}
\begin{tabular}{lcc}
$\# x$ & $\lambda/\nu$ &  Spin LKTs   \\
\hline

$7444$ & $[2,1,-1,2,-1,1,0,2]$  & $[1,0,0,0,0,0,9,5]$ \\
&  $[3,0,-3,2,-2,0,0,4]$&  \\

$6464$ & $[3,1,-2,3,-2,1,0,3]$  & $\textbf{[0,0,0,0,0,0,10,4]}$ \\
&  $[3,0,-3,3,-3,0,0,3]$&  \\

$6455$ & $[3,1,-2,3,-2,1,0,3]$  & $[5,1,0,0,0,0,0,19]$ \\
&  $[3,0,-3,3,-3,0,0,3]$&  \\	

$6872$ & $[3,3,-2,2,-3,4,-2,2]$  & $[0,0,1,1,0,1,2,14]$, $[1,0,0,2,0,0,2,12]$ \\
&  $[\frac{5}{2},1,-\frac{5}{2},2,-\frac{7}{2},\frac{7}{2},-\frac{5}{2},1]$&  \\

$6813$ & $[4,1,-1,2,-3,4,-2,2]$  & $[3,1,0,0,0,1,0,21]$\\
&  $[3,0,-2,2,-\frac{7}{2},\frac{7}{2},-\frac{5}{2},1]$&  \\

$6773$ & $[3,1,-2,2,-2,5,-3,2]$  & $[1,0,1,0,0,0,7,5]$, $[0,0,0,1,0,0,7,7]$ \\
&  $[\frac{5}{2},0,-\frac{5}{2},2,-3,4,-3,1]$&
\end{tabular}
\label{table-EIX-11010101-part2}
\end{table}

\begin{table}[H]
\centering
\caption{Infinitesimal character $[1,1,0,1,0,1,1,1]$}
\begin{tabular}{lcc}
$\# x$ & $\lambda/\nu$ &  Spin LKTs   \\
\hline
$64228$ & $[6,1,-5,8,-5,1,2,1]$  & $[6,0,0,0,0,0,0,4]$, $[4,0,0,0,0,2,2,2]$, \\
&  $[7,1,-7,8,-6,0,2,0]$& $[4,0,0,0,0,1,4,4]$, $[4,0,0,0,0,0,6,6]$\\

$60124$ & $[3,1,-2,6,-5,1,3,1]$  &  $[0,0,0,0,0,0,10,12]$, $[0,0,0,0,0,5,0,22]$,\\
&  $[\frac{9}{2},0,-\frac{9}{2},\frac{17}{2},-\frac{17}{2},0,4,1]$& $[n,0,0,0,0,1,2,20+2n]$, $0\leq n\leq 2$ \\

$26325$ & $[5,1,-4,1,0,3,1,3]$  & $[2,0,0,0,0,4,2,2]$, $[2,1,0,0,0,3,3,4]$ \\
&  $[7,0,-7,2,-2,3,1,1]$&\\

$25050$ & $[4,1,-3,2,-1,2,2,1]$  & $\textbf{[0,4,0,0,0,0,1,21]}$,  \\
&  $[7,1,-7,2,-\frac{5}{2},\frac{5}{2},2,0]$& $[1,3,0,0,0,1,0,23]$, $[0,1,0,0,0,4,0,23]$\\

$24861$ & $[7,1,-4,2,-2,3,2,1]$  & $[0,6,0,0,0,0,0,12]$, $[1,2,3,0,0,0,0,10]$, \\
&  $[\frac{15}{2},0,-\frac{13}{2},2,-\frac{5}{2},\frac{5}{2},2,0]$&$[0,2,2,1,0,0,0,12]$\\

$20270$ & $[1,2,-3,5,-4,2,2,1]$  & $\textbf{[0,0,0,0,4,0,0,12]}$, $[1,0,0,0,3,1,0,13]$, \\
&  $[1,\frac{3}{2},-5,6,-6,\frac{3}{2},1,1]$&  $[2,0,0,0,2,2,0,14]$, $[0,0,0,2,2,0,0,10]$\\

$13517$ & $[3,1,-2,1,0,1,3,1]$  &  $[1,0,0,0,0,2,7,7]$, $[0,0,0,0,1,2,6,5]$\\
&  $[\frac{9}{2},0,-\frac{9}{2},0,-\frac{1}{2},1,\frac{7}{2},1]$&\\

$14593$ & $[1,1,-1,3,-2,1,2,1]$  &  $\textbf{[0,3,0,0,0,0,7,0]}$, $[0,2,1,0,0,0,7,1]$ \\
&  $[1,0,-\frac{7}{2},6,-6,0,\frac{5}{2},1]$&  \\

$14592$ & $[1,1,-1,3,-2,1,2,1]$  & $\textbf{[0,0,3,0,0,0,0,26]}$, $[0,0,3,0,0,0,2,24]$, \\
&  $[1,0,-\frac{7}{2},6,-6,0,\frac{5}{2},1]$&   $[0,0,1,0,2,0,0,28]$\\

$7447$ & $[1,1,0,2,-1,1,1,0]$  & $[0,0,0,0,0,0,1,39]$ \\
&  $[\frac{7}{2},0,-\frac{7}{2},2,-2,0,0,5]$&\\

$6473$ & $[2,1,-1,2,-1,1,1,2]$  & $\textbf{[0,0,0,0,0,0,0,40]}$ \\
&  $[\frac{7}{2},0,-\frac{7}{2},\frac{7}{2},-\frac{7}{2},0,0,\frac{7}{2}]$&
\end{tabular}
\label{table-EIX-11010111}
\end{table}

\begin{table}[H]
\centering
\caption{Infinitesimal character $[1,1,0,1,1,0,1,0]$}
\begin{tabular}{lcc}
$\# x$ & $\lambda/\nu$ &  Spin LKTs   \\
\hline

$11889$ & $[4,2,-2,1,2,-3,4,-1]$  & $[2,1,2,0,0,0,2,17]$, $[2,1,1,0,0,1,2,19]$ \\
&  $[5,\frac{3}{2},-\frac{7}{2},0,2,-4,4,-3]$&\\

$8207$ & $[3,1,-2,1,2,-2,5,-2]$  & $[2,0,0,0,0,1,8,6]$, $[0,0,1,0,0,1,8,8]$ \\
&  $[\frac{7}{2},0,-\frac{7}{2},0,3,-\frac{7}{2},\frac{9}{2},-\frac{7}{2}]$&
\end{tabular}
\label{table-EIX-11011010}
\end{table}

\begin{table}[H]
\centering
\caption{Infinitesimal character $[1,1,0,1,1,0,1,1]$}
\begin{tabular}{lcc}
$\# x$ & $\lambda/\nu$ &  Spin LKTs   \\
\hline
$23904$ & $[6,1,-5,2,1,-1,4,3]$  & $[2,0,0,0,0,5,1,1]$, $[3,0,0,0,0,4,2,2]$, \\
&  $[7,0,-7,2,1,-3,4,1]$& $[2,1,0,0,0,4,2,3]$\\

$22619$ & $[4,1,-3,2,1,-1,3,1]$  & $\textbf{[0,4,0,0,0,0,2,22]}$, $[1,4,0,0,0,0,1,23]$, \\
&  $[7,1,-7,2,0,-\frac{5}{2},\frac{9}{2},0]$& $[1,3,0,0,0,1,1,24]$, $[0,2,0,0,0,3,1,23]$\\

$22447$ & $[5,1,-2,1,1,-1,3,1]$  & $[1,6,0,0,0,0,0,12]$, $[2,3,2,0,0,0,0,11]$ \\
&  $[\frac{15}{2},0,-\frac{13}{2},2,0,-\frac{5}{2},\frac{9}{2},0]$& $[1,3,1,1,0,0,0,13]$, $[1,2,2,1,0,0,0,12]$\\

$12505$ & $[2,1,-1,1,1,0,3,-1]$  & $\textbf{[0,0,0,0,0,3,7,7]}$,  \\
&  $[\frac{9}{2},0,-\frac{9}{2},0,0,0,4,1]$& $[1,0,0,0,0,2,8,8]$, $[0,0,0,0,1,2,7,6]$ \\

$8964$ & $[1,1,0,1,2,-1,1,0]$  & $[1,0,0,0,0,0,1,39]$ \\
&  $[4,0,-4,0,3,-3,0,5]$&
\end{tabular}
\label{table-EIX-11011011}
\end{table}

\begin{table}[H]
\centering
\caption{Infinitesimal character $[1,1,0,1,1,1,0,1]$}
\begin{tabular}{lcc}
$\# x$ & $\lambda/\nu$ &  Spin LKTs   \\
\hline
$21544$ & $[5,1,-4,2,1,1,-1,6]$  & $\textbf{[2,0,0,0,0,6,0,0]}$, $[3,0,0,0,0,5,1,1]$, \\
&  $[7,0,-7,2,1,1,-4,5]$& $[2,1,0,0,0,5,1,2]$\\

$20259$ & $[4,1,-3,2,1,2,-2,3]$  & $\textbf{[0,4,0,0,0,0,3,23]}$, $[1,4,0,0,0,0,2,24]$, \\
&  $[7,1,-7,2,0,2,-\frac{9}{2},\frac{9}{2}]$& $[0,3,0,0,0,2,2,23]$, $[1,3,0,0,0,1,2,25]$\\

$20091$ & $[5,1,-2,1,1,2,-2,3]$  & $[2,6,0,0,0,0,0,12]$, $[3,4,1,0,0,0,0,12]$, \\
&  $[\frac{15}{2},0,-\frac{13}{2},2,0,2,-\frac{9}{2},\frac{9}{2}]$& $[2,4,0,1,0,0,0,14]$, $[2,3,1,1,0,0,0,13]$\\

$10675$ & $[1,1,0,1,1,2,-1,0]$  & $[2,0,0,0,0,0,1,39]$ \\
&  $[\frac{9}{2},0,-\frac{9}{2},0,0,4,-4,5]$&\\

$10672$ & $[1,1,0,1,1,2,-1,0]$  & $\textbf{[0,0,0,0,0,3,8,8]}$, $[1,0,0,0,0,2,9,9]$, \\
&  $[\frac{9}{2},0,-\frac{9}{2},0,0,4,-4,5]$& $[0,0,0,0,1,2,8,7]$
\end{tabular}
\label{table-EIX-11011101}
\end{table}

\begin{table}[H]
\centering
\caption{Infinitesimal character $[1,1,0,1,1,1,1,1]$}
\begin{tabular}{lcc}
$\# x$ & $\lambda/\nu$ &  Spin LKTs   \\
\hline
$32632$ & $[4,2,-2,1,1,1,1,1]$  & $\textbf{[0,7,0,0,0,0,0,19]}$, $[0,7,0,0,0,0,1,18]$,  \\
&  $[\frac{21}{2},\frac{3}{2},-9,0,2,0,2,0]$& $[0,6,1,0,0,0,0,20]$\\

$28770$ & $[4,1,-3,1,2,1,1,3]$  &  $\textbf{[0,0,0,0,0,8,0,0]}$, $[0,0,0,0,1,7,0,1]$\\
&  $[9,0,-9,0,3,1,1,1]$& \\

$12508$ & $[1,1,0,1,1,1,2,-1]$  &  $\textbf{[4,0,0,0,0,0,0,40]}$, $[3,1,0,0,0,0,0,41]$\\
&  $[\frac{11}{2},0,-\frac{11}{2},0,0,0,5,1]$&
\end{tabular}
\label{table-EIX-11011111}
\end{table}

\begin{table}[H]
\centering
\caption{Infinitesimal character $[1,1,1,0,1,0,1,0]$}
\begin{tabular}{lcc}
$\# x$ & $\lambda/\nu$ &  Spin LKTs   \\
\hline
$66733$ & $[1,4,1,-3,6,-2,5,-4]$  & $[6,0,0,0,0,0,0,10]$, $[2,0,0,0,0,4,0,2]$, \\
&  $[0,4,2,-4,5,-3,4,-4]$& $[2,0,0,0,0,0,8,6]$\\

$65437$ & $[1,5,1,-4,5,-1,5,-2]$  & $[0,0,0,0,0,1,0,20]$, $[0,0,0,0,0,0,10,6]$, \\
&  $[0,\frac{9}{2},0,-\frac{9}{2},\frac{9}{2},-\frac{1}{2},\frac{9}{2},-\frac{7}{2}]$& $[4,0,0,0,0,1,0,24]$, $[0,0,0,0,0,5,0,16]$ \\

$19236$ & $[1,4,1,-3,4,-3,4,0]$  & $\textbf{[0,0,0,0,0,0,0,34]}$ \\
&  $[0,5,0,-5,5,-5,5,0]$& \\

$15963$ & $[2,4,2,-1,1,-4,5,-2]$  & $[2,0,1,0,1,0,1,20]$, $[2,0,2,0,0,1,0,18]$ \\
&  $[1,4,1,-1,0,-\frac{9}{2},\frac{9}{2},-\frac{7}{2}]$& \\

$15362$ & $[1,4,1,-1,2,-4,6,-3]$  & $[1,0,1,0,0,0,7,7]$, $[0,0,0,1,0,0,7,9]$ \\
&  $[0,4,0,-1,1,-\frac{9}{2},\frac{11}{2},-\frac{9}{2}]$& \\

$15489$ & $[1,2,2,-3,4,-2,4,-2]$  & $[0,0,1,1,0,0,5,7]$, $[1,1,0,0,1,0,5,9]$, \\
&  $[0,\frac{3}{2},2,-\frac{9}{2},\frac{9}{2},-\frac{7}{2},5,-4]$& $[0,0,0,0,2,0,5,11]$ \\

$13885$ & $[2,2,3,-2,2,-3,4,-1]$  & $[1,0,3,0,0,0,1,17]$, $[1,0,2,0,0,1,1,19]$ \\
&  $[\frac{3}{2},\frac{3}{2},\frac{7}{2},-\frac{7}{2},2,-4,4,-3]$& \\

$13457$ & $[1,3,2,-3,4,-4,7,-4]$  & $[1,0,0,1,0,0,6,8]$, $[0,1,0,0,1,0,6,10]$ \\
&  $[0,\frac{3}{2},2,-\frac{7}{2},\frac{7}{2},-4,\frac{11}{2},-\frac{9}{2}]$& \\

$11745$ & $[1,1,3,-3,4,-2,3,-1]$  & $[2,0,0,1,0,1,0,22]$, $[2,0,1,0,1,0,1,20]$, \\
&  $[1,0,3,-\frac{9}{2},\frac{9}{2},-\frac{7}{2},\frac{7}{2},-\frac{5}{2}]$& $[3,0,0,0,2,0,0,18]$\\

$9954$ & $[3,1,3,-3,4,-4,5,-2]$  & $[3,0,0,1,0,0,1,21]$, $[3,0,1,0,1,0,0,19]$ \\
&  $[1,0,3,-\frac{7}{2},\frac{7}{2},-4,4,-3]$& \\

$9611$ & $[1,1,3,-2,2,-2,5,-2]$  & $[2,0,0,0,0,0,8,6]$, $[0,0,1,0,0,0,8,8]$ \\
&  $[0,0,\frac{7}{2},-\frac{7}{2},3,-\frac{7}{2},\frac{9}{2},-\frac{7}{2}]$& \\

$8436$ & $[2,4,2,-3,2,-2,4,-1]$  & $[1,1,0,0,1,1,3,11]$, $[0,2,0,0,1,0,4,13]$ \\
&  $[1,4,\frac{3}{2},-4,\frac{3}{2},-2,3,-2]$& \\

$4991$ & $[1,1,1,0,1,1,0,-1]$  & $[5,0,0,0,0,0,1,23]$, $[5,1,0,0,0,0,0,21]$ \\
&  $[\frac{5}{2},\frac{5}{2},0,-\frac{5}{2},0,-\frac{1}{2},3,-2]$&
\end{tabular}
\label{table-EIX-11101010}
\end{table}

\begin{table}[H]
\centering
\caption{Infinitesimal character $[1,1,1,0,1,0,1,1]$, part I}
\begin{tabular}{lcc}
$\# x$ & $\lambda/\nu$ &  Spin LKTs   \\
\hline
$67055_{\clubsuit}$ & $[1,2,1,-1,3,-1,4,1]$  & $[0,0,0,0,0,2,5,5]$,  $[0,0,0,0,0,3,5,5]$, \\
&  $[0,1,0,-1,4,-3,4,1]$&  $[0,0,0,0,0,n,10-2n,10-2n]$, $0\leq n\leq 5$\\

$65831_{\clubsuit}$ & $[1,6,1,-5,6,-3,7,0]$  & $[3,0,0,0,0,0,0,22]$, $[2,0,0,0,0,0,2,22]$,
 \\
&  $[0,\frac{9}{2},0,-\frac{9}{2}, \frac{9}{2}, -\frac{1}{2}, 1, \frac{9}{2}]$& $[n,0,0,0,0,0,1,2n+17]$, $0\leq n\leq 5$\\

$65296$ & $[1,1,6,-3,6,-5,2,1]$  & $[6,0,0,0,0,0,0,6]$, $[3,0,0,0,0,1,4,4]$, \\
&  $[0,1,7,-5,6,-6,2,0]$& $[3,0,0,0,0,2,4,4]$,  \\
& & $[3,0,0,0,0,n,7-2n,7-2n]$, $0\leq n\leq 3$\\

$61948$ & $[1,1,3,-2,6,-5,3,1]$  &  $[2,0,0,0,0,0,2,22]$, $[1,0,0,0,0,2,0,22]$,\\
&  $[0,0,\frac{9}{2},-\frac{9}{2},\frac{17}{2},-\frac{17}{2},4,1]$& $[0,0,0,0,0,0,10,10]$, $[0,0,0,0,0,5,0,20]$, \\
& &  $[n,0,0,0,0,1,1,2n+19]$, $0\leq n\leq 3$\\

$54724^*$ & $[1,2,2,-1,2,-5,8,1]$  & $[5,1,0,0,0,0,0,5]$, $[4,1,1,0,0,0,0,7]$, \\
&  $[\frac{3}{2},\frac{3}{2},\frac{3}{2},-\frac{3}{2},\frac{3}{2},-8,10,0]$& $[3,1,2,0,0,0,0,9]$, $[3,3,0,0,0,0,2,5]$,\\
& & $[0,6,0,0,0,0,0,10]$, $[2,2,2,0,0,0,0,10]$,\\
& & $[1,5,0,0,0,0,1,8]$, $[2,4,0,0,0,0,2,6]$\\

$40871$ & $[3,2,2,-1,1,-3,4,2]$  & $\textbf{[0,0,0,0,0,0,3,27]}$, $[1,0,0,0,0,0,2,28]$, \\
&  $[4,\frac{5}{2},\frac{5}{2},-\frac{5}{2},0,-\frac{13}{2},\frac{13}{2},\frac{5}{2}]$& $[1,1,0,0,0,0,1,30]$, $[1,2,0,0,0,0,0,32]$\\

$40869$ & $[3,2,2,-1,1,-3,4,2]$  & $[0,0,0,0,0,4,3,11]$, $[0,0,0,0,1,2,4,13]$, \\
&  $[4,\frac{5}{2},\frac{5}{2},-\frac{5}{2},0,-\frac{13}{2},\frac{13}{2},\frac{5}{2}]$& $[0,0,0,0,2,0,5,15]$, $[0,0,0,0,0,5,2,10]$\\

$42386$ & $[2,7,1,-4,6,-5,2,1]$  & $[0,3,1,0,0,0,0,17]$, $[0,3,0,0,1,0,0,18]$, \\
&  $[\frac{3}{2},\frac{15}{2},0,-\frac{11}{2},7,-7,2,0]$& $[0,2,2,0,0,0,0,18]$, $[0,2,1,0,1,0,0,19]$,\\
& & $[0,3,1,0,0,0,3,14]$, $[0,4,0,0,0,0,4,12]$,\\
& & $[0,1,3,0,0,0,1,18]$, $[0,0,4,0,0,0,0,20]$\\

$38638$ & $[2,1,1,-3,6,-5,6,2]$  & $\textbf{[0,0,4,0,0,0,0,0]}$, $[0,0,3,1,0,0,0,2]$, \\
&  $[1,1,1,-5,6,-6,7,1]$&  $[0,0,2,2,0,0,0,4]$, $[0,4,2,0,0,0,0,0]$\\

$38573$ & $[1,7,1,-5,6,-3,1,3]$  & $\textbf{[0,0,0,0,0,5,0,0]}$, $[1,0,0,0,0,5,0,2]$, \\
&  $[0,9,0,-6,6,-5,1,1]$& $[0,0,0,0,2,3,0,2]$, $[0,0,0,1,1,3,0,1]$,\\
&                       & $[0,0,0,0,3,2,0,3]$, $[0,0,0,1,2,2,0,2]$ \\

$32571$ & $[3,2,1,-4,5,-2,3,3]$  & $[6,0,0,0,0,0,2,16]$, $[4,1,0,0,0,0,3,16]$, \\
&  $[\frac{5}{2},3,0,-6,6,-5,5,\frac{5}{2}]$& $[2,1,0,0,0,2,3,12]$, $[3,0,0,0,0,3,2,10]$,\\
& & $[1,2,0,0,0,1,4,14]$, $[0,3,0,0,0,0,5,16]$\\

$31085$ & $[1,3,1,-2,3,-2,3,1]$  & $\textbf{[0,0,0,0,0,2,0,28]}$, $[0,0,1,0,0,1,0,30]$, \\
&  $[\frac{5}{2},3,0,-\frac{11}{2},\frac{11}{2},-\frac{11}{2},\frac{11}{2},\frac{5}{2}]$& $[0,0,2,0,0,0,0,32]$\\

$31081$ & $[1,3,1,-2,3,-2,3,1]$  & $[5,0,0,0,0,0,3,17]$, $[2,0,0,0,0,3,3,11]$, \\
&  $[\frac{5}{2},3,0,-\frac{11}{2},\frac{11}{2},-\frac{11}{2},\frac{11}{2},\frac{5}{2}]$& $[1,1,0,0,0,2,4,13]$, $[0,2,0,0,0,1,5,15]$\\

$31080$ & $[1,3,1,-2,3,-2,3,1]$  & $[5,1,0,0,0,0,2,15]$, $[3,1,0,0,0,2,2,11]$, \\
&  $[\frac{5}{2},3,0,-\frac{11}{2},\frac{11}{2},-\frac{11}{2},\frac{11}{2},\frac{5}{2}]$& $[2,2,0,0,0,1,3,13]$, $[1,3,0,0,0,0,4,15]$
\end{tabular}
\label{table-EIX-11101011-part1}
\end{table}

\begin{table}[H]
\centering
\caption{Infinitesimal character $[1,1,1,0,1,0,1,1]$, part II}
\begin{tabular}{lcc}
$\# x$ & $\lambda/\nu$ &  Spin LKTs   \\
\hline
$26299$ & $[1,1,5,-3,1,-1,4,1]$  & $[2,0,0,0,0,4,1,1]$, $[2,1,0,0,0,3,2,3]$, \\
&  $[0,0,7,-5,1,-3,4,1]$& $[1,2,0,0,0,3,2,4]$\\

$24960$ & $[1,1,4,-2,1,-1,3,1]$  & $[0,6,0,0,0,0,0,10]$, $[2,2,2,0,0,0,0,10]$, \\
&  $[1,0,\frac{13}{2},-\frac{9}{2},0,-\frac{5}{2},\frac{9}{2},0]$& $[1,1,2,1,0,0,0,11]$, $[0,1,1,2,0,0,0,13]$\\

$24618$ & $[1,1,4,-2,1,-1,3,1]$  & $\textbf{[0,3,0,0,0,0,2,21]}$, $[1,3,0,0,0,0,1,22]$, \\
&  $[0,1,7,-5,0,-\frac{5}{2},\frac{9}{2},0]$& $[2,2,0,0,0,1,0,24]$, $[0,1,0,0,0,4,0,21]$\\

$20809$ & $[1,3,1,-2,3,-2,4,-1]$  & $\textbf{[1,0,0,0,0,0,0,34]}$, $[0,0,0,0,0,1,0,36]$ \\
&  $[0,\frac{11}{2},0,-\frac{11}{2},\frac{11}{2},-\frac{11}{2},5,1]$&\\

$14188$ & $[1,1,3,-2,1,0,3,1]$  & $[1,0,0,0,0,1,8,8]$, $[0,0,0,0,1,1,7,6]$, \\
&  $[0,0,\frac{9}{2},-\frac{9}{2},0,0,4,1]$& $[0,0,0,0,2,0,7,5]$\\

$15961$ & $[1,6,2,-4,2,-1,3,1]$  & $\textbf{[0,0,0,0,4,0,0,10]}$, $[2,0,0,0,2,2,0,8]$, \\
&  $[1,6,1,-5,1,-1,2,1]$& $[0,1,0,0,3,1,0,8]$, $[1,0,1,0,1,3,0,9]$\\

$10387$ & $[1,1,3,-2,2,-1,1,4]$  & $[0,0,0,0,0,0,1,37]$ \\
&  $[0,0,4,-4,3,-3,0,5]$&\\

$12847$ & $[1,3,2,-2,1,0,2,1]$  & $\textbf{[0,0,0,0,3,0,2,15]}$, $[1,0,0,0,3,0,1,16]$, \\
&  $[1,5,2,-5,0,0,2,1]$& $[0,2,0,0,1,0,4,17]$, $[0,1,0,1,1,0,3,15]$\\

$9400$ & $[1,1,3,-2,3,-2,1,3]$  & $\textbf{[0,0,0,0,0,0,0,38]}$ \\
&  $[0,0,4,-4,4,-4,0,4]$&\\

$9276$ & $[2,0,0,1,0,1,1,1]$  & $\textbf{[1,0,2,0,0,0,0,28]}$, $[2,0,0,1,0,0,0,30]$ \\
&  $[\frac{3}{2},\frac{9}{2},\frac{3}{2},-\frac{9}{2},\frac{3}{2},-\frac{3}{2},0,3]$&\\

$9275$ & $[2,0,0,1,0,1,1,1]$  & $[1,4,0,0,0,0,4,2]$, $[0,3,0,1,0,0,4,3]$ \\
&  $[\frac{3}{2},\frac{9}{2},\frac{3}{2},-\frac{9}{2},\frac{3}{2},-\frac{3}{2},0,3]$&\\

$6233$ & $[2,2,1,-1,1,0,1,2]$  & $\textbf{[0,1,0,0,0,0,10,3]}$, $[0,0,1,0,0,0,10,2]$ \\
&  $[3,3,0,-3,0,-\frac{1}{2},1,\frac{5}{2}]$&\\

$6232$ & $[2,2,1,-1,1,0,1,2]$  & $[4,1,1,0,0,0,0,21]$, $[3,0,1,0,1,0,0,23]$ \\
&  $[3,3,0,-3,0,-\frac{1}{2},1,\frac{5}{2}]$&\\

$6920$ & $[1,1,2,-2,4,-2,1,2]$  & $\textbf{[0,0,0,2,0,0,5,7]}$, $[1,0,0,2,0,0,4,8]$, \\
&  $[1,1,1,-\frac{7}{2},\frac{9}{2},-3,0,\frac{3}{2}]$&$[0,1,0,1,0,1,5,8]$, $[1,0,0,1,1,0,5,6]$\\

$5549$ & $[2,2,1,-2,3,-1,1,2]$  & $\textbf{[0,0,2,0,0,3,0,16]}$, $[1,1,1,0,0,3,0,15]$, \\
&  $[2,2,0,-\frac{7}{2},\frac{7}{2},-\frac{5}{2},1,1]$&$[0,0,1,1,0,2,1,17]$\\

$4980$ & $[2,2,1,-2,3,-1,1,2]$  & $\textbf{[4,0,0,0,1,0,0,25]}$, $[4,0,0,1,0,0,0,24]$, \\
&  $[2,2,0,-\frac{7}{2},\frac{7}{2},-2,0,\frac{3}{2}]$& $[4,1,0,0,0,0,1,26]$
\end{tabular}
\label{table-EIX-11101011-part2}
\end{table}

\begin{table}[H]
\centering
\caption{Infinitesimal character $[1,1,1,0,1,1,0,1]$, part I}
\begin{tabular}{lcc}
$\# x$ & $\lambda/\nu$ &  Spin LKTs   \\
\hline

$52964^*$ & $[1,2,2,-1,2,2,-5,6]$  & $[6,1,0,0,0,0,0,5]$, $[5,2,0,0,0,0,0,6]$, \\
&  $[\frac{3}{2},\frac{3}{2},\frac{3}{2},-\frac{3}{2},\frac{3}{2},2,-10,10]$& $[5,1,1,0,0,0,0,7]$, $[4,2,1,0,0,0,0,8]$,\\
& & $[4,3,0,0,0,0,1,6]$, $[3,3,1,0,0,0,0,9]$, \\
& &$[2,5,0,0,0,0,0,9]$, $[3,4,0,0,0,0,1,7]$, \\
&  &$[1,6,0,0,0,0,0,10]$, $[2,5,0,0,0,0,1,8]$ \\

$38508$ & $[3,1,1,0,1,1,-2,3]$  &  $\textbf{[0,0,0,0,0,0,4,28]}$, $[1,0,0,0,0,0,3,29]$,\\
&  $[4,\frac{5}{2},\frac{5}{2},-\frac{5}{2},0,0,-\frac{13}{2},9]$& $[2,0,0,0,0,0,2,30]$, $[1,1,0,0,0,0,2,31]$,\\
& & $[2,1,0,0,0,0,1,32]$\\

$38506$ & $[3,1,1,0,1,1,-2,3]$  & $[0,0,0,0,0,3,5,13]$, $[0,0,0,0,1,1,6,15]$, \\
&  $[4,\frac{5}{2},\frac{5}{2},-\frac{5}{2},0,0,-\frac{13}{2},9]$& $[0,0,0,0,0,4,4,12]$, $[0,0,0,0,1,2,5,14]$,\\
& &  $[0,0,0,0,0,5,3,11]$\\

$36211$ & $[2,1,2,-3,4,2,-4,6]$  & $\textbf{[1,0,4,0,0,0,0,0]}$, $[0,1,4,0,0,0,0,1]$, \\
&  $[1,1,1,-5,6,1,-7,8]$& $[1,0,3,1,0,0,0,2]$, $[0,1,3,1,0,0,0,3]$,\\
& & $[0,3,3,0,0,0,0,1]$\\

$28625$ & $[2,2,1,-2,3,1,-2,4]$  & $\textbf{[0,0,0,0,0,2,1,29]}$, $[1,0,0,0,0,2,0,30]$, \\
&  $[\frac{5}{2},3,0,-\frac{11}{2},\frac{11}{2},0,-\frac{11}{2},8]$& $[0,0,1,0,0,1,1,31]$, $[1,0,1,0,0,1,0,32]$\\

$28621$ & $[2,2,1,-2,3,1,-2,4]$  & $[5,0,0,0,0,0,4,18]$, $[2,0,0,0,0,2,5,13]$, \\
&  $[\frac{5}{2},3,0,-\frac{11}{2},\frac{11}{2},0,-\frac{11}{2},8]$& $[1,1,0,0,0,1,6,15]$, $[2,0,0,0,0,3,4,12]$,\\
& & $[1,1,0,0,0,2,5,14]$\\

$28620$ & $[2,2,1,-2,3,1,-2,4]$  & $[5,1,0,0,0,0,3,14]$, $[4,1,0,0,0,1,3,12]$, \\
&  $[\frac{5}{2},3,0,-\frac{11}{2},\frac{11}{2},0,-\frac{11}{2},8]$& $[3,2,0,0,0,0,4,14]$, $[3,2,0,0,0,1,3,13]$,\\
& & $[2,3,0,0,0,0,4,15]$\\

$23882$ & $[1,1,4,-2,1,1,-1,4]$  & $\textbf{[2,0,0,0,0,5,0,0]}$, $[3,0,0,0,0,4,1,1]$, \\
&  $[0,0,7,-5,1,1,-4,5]$& $[2,1,0,0,0,4,1,2]$, $[1,2,0,0,0,4,1,3]$\\

$22535$ & $[1,1,4,-2,1,2,-2,3]$  & $[3,3,1,0,0,0,0,11]$, $[1,6,0,0,0,0,0,10]$, \\
&  $[1,0,\frac{13}{2},-\frac{9}{2},0,2,-\frac{9}{2},\frac{9}{2}]$& $[2,2,1,1,0,0,0,12]$, $[1,2,0,2,0,0,0,14]$,\\
& & $[1,1,1,2,0,0,0,13]$\\

$22221$ & $[1,1,4,-2,1,2,-2,3]$  & $\textbf{[0,3,0,0,0,0,3,22]}$, $[1,3,0,0,0,0,2,23]$,\\
&  $[0,1,7,-5,0,2,-\frac{9}{2},\frac{9}{2}]$&  $[2,3,0,0,0,0,1,24]$, $[0,2,0,0,0,3,1,21]$,\\
& & $[2,2,0,0,0,1,1,25]$
\end{tabular}
\label{table-EIX-11101101-I}
\end{table}

\begin{table}[H]
\centering
\caption{Infinitesimal character $[1,1,1,0,1,1,0,1]$, part II}
\begin{tabular}{lcc}
$\# x$ & $\lambda/\nu$ &  Spin LKTs   \\
\hline
$12214$ & $[1,1,3,-2,1,2,-1,4]$  &  $[1,0,0,0,0,0,1,37]$\\
&  $[0,0,\frac{9}{2},-\frac{9}{2},0,4,-4,5]$& \\

$12211$ & $[1,1,3,-2,1,2,-1,4]$  & $\textbf{[0,0,0,0,0,2,8,8]}$, $[1,0,0,0,0,1,9,9]$, \\
&  $[0,0,\frac{9}{2},-\frac{9}{2},0,4,-4,5]$& $[0,0,0,0,1,1,8,7]$, $[0,0,0,0,2,0,8,6]$\\

$13888$ & $[2,5,1,-3,2,1,-1,3]$  & $\textbf{[1,0,0,0,4,0,0,10]}$, $[2,0,0,0,3,1,0,9]$, \\
&  $[1,6,1,-5,1,1,-2,3]$& $[0,1,0,0,4,0,0,9]$, $[1,1,0,0,3,1,0,8]$,\\
& & $[1,0,1,0,2,2,0,10]$\\

$11005$ & $[2,0,0,1,1,0,1,1]$  & $[1,4,0,0,0,0,5,1]$, $[0,3,0,1,0,0,5,2]$ \\
&  $[1,5,2,-5,0,2,-2,3]$&\\

$11004$ & $[2,0,0,1,1,0,1,1]$  & $\textbf{[0,0,3,0,0,0,0,28]}$, $[1,0,2,0,0,0,1,29]$, \\
&  $[1,5,2,-5,0,2,-2,3]$& $[1,0,1,1,0,0,0,30]$\\

$11002$ & $[2,0,0,1,1,0,1,1]$  & $\textbf{[0,0,0,0,3,0,3,16]}$, $[0,1,0,0,2,0,4,17]$, \\
&  $[1,5,2,-5,0,2,-2,3]$& $[1,0,0,0,3,0,2,17]$, $[0,0,0,1,2,0,3,15]$,\\
& & $[0,1,0,1,1,0,4,16]$\\

$5206$ & $[2,2,1,-1,1,1,0,2]$  & $\textbf{[0,1,0,0,0,0,11,4]}$, $[0,0,1,0,0,0,11,3]$ \\
&  $[3,3,0,-3,0,0,0,3]$& \\

$5196$ & $[2,2,1,-1,1,1,0,2]$  & $\textbf{[4,0,2,0,0,0,0,22]}$, $[5,1,1,0,0,0,0,21]$, \\
&  $[3,3,0,-3,0,0,0,3]$& $[4,0,1,0,1,0,0,23]$
\end{tabular}
\label{table-EIX-11101101-II}
\end{table}

\begin{table}[H]
\centering
\caption{Infinitesimal character $[1,1,1,0,1,1,1,1]$}
\begin{tabular}{lcc}
$\# x$ & $\lambda/\nu$ &  Spin LKTs   \\
\hline
$67078_{\clubsuit}^*$ & $[3,2,2,-1,1,1,2,1]$  & $[0,0,0,0,0,0,0,8]+n\beta$, $1\leq n\leq 10$ \\
&  $[4,\frac{5}{2},\frac{5}{2},-\frac{5}{2},0,0,\frac{5}{2},\frac{5}{2}]$& \\

$59541$ & $[1,1,1,-2,4,1,1,1]$  & $\textbf{[7,0,0,0,0,0,0,0]}$,  \\
&  $[1,0,1,-9,11,0,2,0]$& $[7,0,0,0,0,0,0,0]+n\beta$, $1\leq n\leq 4$\\

$52495$ & $[2,1,1,-2,3,1,2,1]$  &  $\textbf{[0,0,0,0,0,0,6,22]}$, $[0,0,0,0,0,0,10,18]$,\\
&  $[4,0,0,-\frac{17}{2},\frac{17}{2},0,4,1]$& $[0,0,0,0,0,n,6-n,22+n]$, $1\leq n\leq 4$\\

$35211$ & $[2,2,5,-4,2,1,1,1]$  & $\textbf{[0,6,0,0,0,0,0,18]}$, $[0,6,0,0,0,0,1,17]$, \\
&  $[\frac{3}{2},\frac{3}{2},9,-9,2,0,2,0]$& $[0,5,1,0,0,0,0,19]$, $[0,6,0,0,0,0,2,16]$,\\
& & $[0,4,2,0,0,0,0,20]$\\

$31203$ & $[1,1,3,-2,2,1,1,1]$  & $\textbf{[0,0,0,0,0,7,0,0]}$, $[0,0,0,0,1,6,0,1]$, \\
&  $[0,0,9,-9,3,1,1,1]$& $[0,0,0,0,2,5,0,2]$\\

$14191$ & $[1,1,2,-1,1,1,2,1]$  & $\textbf{[3,0,0,0,0,0,0,38]}$, $[2,1,0,0,0,0,0,39]$, \\
&  $[0,0,\frac{11}{2},-\frac{11}{2},0,0,5,1]$& $[1,2,0,0,0,0,0,40]$\\

$18132$ & $[1,4,1,-2,1,2,1,1]$  & $\textbf{[0,0,0,0,5,0,0,13]}$, $[1,0,0,0,4,1,0,14]$, \\
&  $[1,\frac{15}{2},1,-6,0,\frac{3}{2},1,1]$& $[0,0,0,1,4,0,0,12]$\\

$12850$ & $[1,3,2,-2,1,1,2,1]$  & $\textbf{[0,4,0,0,0,0,8,0]}$, $[0,3,1,0,0,0,8,1]$ \\
&  $[1,6,\frac{5}{2},-6,0,0,\frac{5}{2},1]$& \\

$12849$ & $[1,3,2,-2,1,1,2,1]$  & $\textbf{[0,0,4,0,0,0,0,28]}$, $[0,0,4,0,0,0,1,27]$, \\
&  $[1,6,\frac{5}{2},-6,0,0,\frac{5}{2},1]$& $[0,0,3,0,1,0,0,29]$\\

$5214$ & $[2,2,1,-1,1,1,1,2]$  & $\textbf{[0,1,0,0,0,0,0,43]}$, $[0,0,0,0,0,1,0,44]$, \\
&  $[\frac{7}{2},\frac{7}{2},0,-\frac{7}{2},0,0,0,\frac{7}{2}]$&
\end{tabular}
\label{table-EIX-11101111}
\end{table}

\begin{table}[H]
\centering
\caption{Infinitesimal character $[1,1,1,1,0,1,0,1]$}
\begin{tabular}{lcc}
$\# x$ & $\lambda/\nu$ &  Spin LKTs   \\
\hline

$33745$ & $[2,1,2,1,-3,5,-4,6]$  & $\textbf{[0,0,5,0,0,0,0,0]}$, $[0,0,4,1,0,0,0,2]$, \\
&  $[1,1,1,1,-6,7,-7,8]$& $[0,2,4,0,0,0,0,0]$\\

$27627$ & $[3,2,1,1,-4,5,-2,5]$  & $[6,0,0,0,0,0,4,16]$, $[4,1,0,0,0,0,5,16]$, \\
&  $[\frac{5}{2},3,0,0,-6,6,-5,\frac{15}{2}]$& $[3,1,0,0,0,1,5,14]$, $[4,0,0,0,0,2,4,12]$,\\
& & $[2,2,0,0,0,0,6,16]$\\

$26187$ & $[1,2,1,1,-1,2,-1,2]$  & $\textbf{[0,0,0,0,0,3,0,30]}$, $[0,0,1,0,0,2,0,32]$ \\
&  $[\frac{5}{2},3,0,0,-\frac{11}{2},\frac{11}{2},-\frac{11}{2},8]$& \\

$26183$ & $[1,2,1,1,-1,2,-1,2]$  & $[5,0,0,0,0,0,5,17]$, $[3,0,0,0,0,2,5,13]$, \\
&  $[\frac{5}{2},3,0,0,-\frac{11}{2},\frac{11}{2},-\frac{11}{2},8]$& $[2,1,0,0,0,1,6,15]$\\

$26182$ & $[1,2,1,1,-1,2,-1,2]$  & $[5,1,0,0,0,0,4,15]$, $[4,1,0,0,0,1,4,13]$, \\
&  $[\frac{5}{2},3,0,0,-\frac{11}{2},\frac{11}{2},-\frac{11}{2},8]$& $[3,2,0,0,0,0,5,15]$\\

$11597$ & $[1,2,1,1,-2,4,-2,2]$  & $\textbf{[0,0,3,0,0,0,6,10]}$, $[1,0,3,0,0,0,5,11]$, \\
&  $[0,\frac{3}{2},2,0,-\frac{9}{2},6,-5,1]$& $[1,1,2,0,0,0,6,9]$, $[0,0,2,0,1,0,6,11]$\\

$8329$ & $[3,1,2,1,-3,4,-2,2]$  & $\textbf{[4,0,0,0,0,3,0,22]}$, $[4,0,0,0,1,2,0,21]$, \\
&  $[1,0,3,0,-\frac{9}{2},\frac{9}{2},-\frac{7}{2},1]$& $[3,0,1,0,0,2,1,23]$
\end{tabular}
\label{table-EIX-11110101}
\end{table}

\begin{table}[H]
\centering
\caption{Infinitesimal character $[1,1,1,1,0,1,1,1]$}
\begin{tabular}{lcc}
$\# x$ & $\lambda/\nu$ &  Spin LKTs   \\
\hline
$58238$ & $[1,1,1,2,-4,5,2,1]$  & $\textbf{[8,0,0,0,0,0,0,0]}$, \\
&  $[1,0,1,2,-11,11,2,0]$& $[8,0,0,0,0,0,n,n]$, $1\leq n\leq 3$\\

$50615$ & $[2,1,1,1,-2,3,2,1]$  &  $\textbf{[0,0,0,0,0,0,7,23]}$, $[0,0,0,0,0,0,10,20]$,\\
&  $[4,0,0,0,-\frac{17}{2},\frac{17}{2},4,1]$&  $[0,0,0,0,0,n,7-n,23+n]$, $1\leq n\leq 3$ \\

\end{tabular}
\label{table-EIX-11110111}
\end{table}

\begin{table}[H]
\centering
\caption{Infinitesimal character $[1,1,1,1,1,0,1,0]$}
\begin{tabular}{lcc}
$\# x$ & $\lambda/\nu$ &  Spin LKTs   \\
\hline
$22704$ & $[2,1,1,1,1,-2,4,-1]$  & $[6,0,0,0,0,0,6,16]$, $[5,0,0,0,0,1,6,14]$, \\
&  $[\frac{5}{2},3,0,0,0,-6,\frac{17}{2},-\frac{15}{2}]$& $[4,1,0,0,0,0,7,16]$
\end{tabular}
\label{table-EIX-11111010}
\end{table}

\begin{table}[H]
\centering
\caption{Infinitesimal character $[1,1,1,1,1,0,1,1]$}
\begin{tabular}{lcc}
$\# x$ & $\lambda/\nu$ &  Spin LKTs   \\
\hline
$56790$ & $[1,1,1,1,1,-1,3,1]$  & $\textbf{[9,0,0,0,0,0,0,0]}$,  \\
&  $[1,0,1,2,0,-11,13,0]$& $[9,0,0,0,0,0,1,1]$, $[9,0,0,0,0,0,2,2]$ \\

$48602$ & $[1,1,1,1,1,-1,3,1]$  & $\textbf{[0,0,0,0,0,0,8,24]}$, $[0,0,0,0,0,1,7,25]$, \\
&  $[4,0,0,0,0,-\frac{17}{2},\frac{25}{2},1]$& $[0,0,0,0,0,0,10,22]$, $[0,0,0,0,0,2,6,26]$
\end{tabular}
\label{table-EIX-11111011}
\end{table}

\begin{table}[H]
\centering
\caption{Infinitesimal character $[1,1,1,1,1,1,0,1]$}
\begin{tabular}{lcc}
$\# x$ & $\lambda/\nu$ &  Spin LKTs   \\
\hline
$55211$ & $[1,1,1,1,1,2,-2,3]$  & $\textbf{[10,0,0,0,0,0,0,0]}$, $[10,0,0,0,0,0,1,1]$ \\
&  $[1,0,1,2,0,2,-13,13]$& \\

$46497$ & $[1,1,1,1,1,1,-1,4]$  & $\textbf{[0,0,0,0,0,0,9,25]}$, $[0,0,0,0,0,0,10,24]$, \\
&  $[4,0,0,0,0,4,-\frac{25}{2},\frac{27}{2}]$& $[0,0,0,0,0,1,8,26]$
\end{tabular}
\label{table-EIX-11111101}
\end{table}

\begin{table}[H]
\centering
\caption{Infinitesimal character $[1, 1, 1, 1, 1, 1, 1, 1]$}
\begin{tabular}{lccc}
$\# x$ & $\lambda$ & $\nu$ &  Spin LKT   \\
\hline
$67109_{\clubsuit}$ & $[1,1,1,1,1,1,1,1]$  & $[4,0,0,0,0,4,1,1]$ & $\textbf{[0,0,0,0,0,0,0,0]}$   	
\end{tabular}
\label{table-EIX-11111111}
\end{table}

\centerline{\scshape Acknowledgements}
 Luan thanks the support from both School of Mathematics and Informatization Office, Shandong University. We are all deeply grateful to the \texttt{atlas} mathematicians.

\centerline{\scshape Funding}
Dong is supported by the National Natural Science Foundation of China (grant 12171344). Luan is supported by the Shandong Provincial Natural Science Foundation under Grant ZR2022QA056.

\end{document}